\begin{document}
\thispagestyle{empty}

\title {\bf  RINGS IN WHICH EVERY NILPOTENT IS CENTRAL}
\author{{\bf Burcu Ungor and Sait Halicioglu}\\
{\small Department of Mathematics, Ankara University, Turkey}\\
{\small $\texttt{bungor@science.ankara.edu.tr}~~~~~$}  {\small
$\texttt{halici@ankara.edu.tr}$}\\
{\bf Handan Kose}\\
{\small Department of Mathematics, Ahi Evran University, Turkey}\\
{\small $\texttt{hkose@ahievran.edu.tr}$}\\ and\\
{\bf Abdullah Harmanci}\\
{\small  Department of Mathematics, Hacettepe University,  Turkey}\\
{\small $\texttt{harmanci@hacettepe.edu.tr}$}}

\date{}
\maketitle

\newtheorem {thm}{Theorem}[section]
\newtheorem{lem}[thm]{Lemma}
\newtheorem{prop}[thm]{Proposition}
\newtheorem{cor}[thm]{Corollary}
\newtheorem{df}[thm]{Definition}
\newtheorem{nota}{Notation}
\newtheorem{note}[thm]{Remark}
\newtheorem{ex}[thm]{Example}
\newtheorem{exs}[thm]{Examples}
\newtheorem{rmk}[thm]{Remark}
\newtheorem{que}[thm]{Question}
\newenvironment{proof}{\par\noindent{\bf Proof. \,}}

\begin{abstract} In this paper, we introduce a class of rings in
which every nilpotent element is central. This class of rings
generalizes so-called reduced rings. A ring $R$ is called {\it
central reduced} if every nilpotent element of $R$ is central.
 For a ring $R$,  we prove that $R$ is central reduced if and only if
  $R[x_1,x_2,\ldots,x_n]$ is central reduced if and
only if  $R[[x_1,x_2,\ldots,x_n]]$ is central reduced if and only
if $R[x_1,x_1^{-1},x_2,x_2^{-1},\ldots,x_n,x_n^{-1}]$ is central
reduced. Moreover,  if $R$ is a central reduced ring, then the
trivial extension $T(R,R)$ is central  Armendariz.\\

\noindent {\bf 2010 AMS Subject Classification:} \, 13C99, 16D80,
16U80

\noindent {\bf Key words:} reduced rings, central reduced rings,
central semicommutative rings, central Armendariz rings,
$2$-primal rings.
\end{abstract}

\newpage
\section{Introduction}
Throughout this paper all rings are associative with identity
unless otherwise stated. A ring is {\it reduced} if it has no
nonzero nilpotent elements.  A ring $R$ is called
$semicommutative$ if for any $a,b \in R$, $ab=0 $ implies $aRb=0$.
Recently a generalization of semicommutative rings is given in
\cite{AOH}. A ring $R$ is called {\it central semicommutative}  if
for any $a,b\in R$, $ab=0$ implies  $arb$ is a central element of
$R$ for each $r\in R$. A ring $R$ is said to be $abelian$ if every
idempotent in $R$ is central. A ring $R$ is called {\it right
(left)
 principally quasi-Baer }\cite{BK}  if the right (left) annihilator of a
principal right (left) ideal of $R$ is generated by an idempotent.
Finally, a ring $R$ is called {\it right (left) principally
projective} if the right (left) annihilator of an element of $R$
is generated by an idempotent \cite{BKP}.

In this paper,  we introduce central reduced rings as a
generalization of reduced  rings. Clearly, reduced rings are
central reduced.  We supply some examples to show that all central
reduced rings need not be reduced.  Among others we prove that
central reduced rings are abelian and there exists an abelian ring
but not central  reduced. Therefore the class of central reduced
 rings lies strictly between  classes of reduced rings and
abelian rings. We prove that every central reduced ring is weakly
semicommutative, central semicommutative,  2-primal, abelian and
so directly finite, and a ring $R$ is central reduced if and only
if the  Dorroh extension of $R$ is central reduced. Moreover, it
is proven that if $R$ is a right principally projective ring, then
$R$ is central reduced  if and only if $R[x]/(x^n)$ is central
Armendariz,  where $n\geq 2~$ is a natural number and $(x^n)$ is
the ideal generated by $x^n$. It is shown that a ring $R$ is
central reduced if and only if the polynomial ring
$R[x_1,x_2,\ldots,x_n]$ is central reduced if and only if the
power series ring $R[[x_1,x_2,\ldots,x_n]]$ is central reduced if
and only if the Laurent polynomial ring
$R[x_1,x_1^{-1},x_2,x_2^{-1},\ldots,x_n,x_n^{-1}]$ is central
reduced. Finally we prove that if $R$ is a central reduced ring,
then the trivial extension $T(R,R)$ is central Armendariz.


Throughout this paper, $\Bbb{Z}$ denotes the ring of integers and
for a positive integer $n$, $\Bbb{Z}_n$ is the ring of integers
modulo $n$.

\section{Central Reduced Rings} In this section  we introduce a class of rings in
which every nilpotent element is central. We now give our main
definition.

\begin{df} A ring $R$ is called {\it central reduced } if every nilpotent element of $R$ is central.
\end{df}


 Commutative rings and  reduced rings are central
reduced. Every unit-central ring (i.e., every unit element of $R$
is central \cite{KMS}) is central reduced. One may suspect that
central reduced rings are reduced.  We now give an example to show
that central reduced rings need not be reduced.

\begin{ex}{\rm Let $S$ be a  commutative ring and $R=S[x]/(x^2)$. Then $R$ is
a commutative ring and so  it is central reduced. If $a=x+(x^2)
\in R$, then $a^2=0$. Therefore $R$ is not  a reduced ring.
}\end{ex}


Recall that a ring $R$ is {\it semiprime}  if $aRa=0$ implies
$a=0$ for $a \in R$.  Our next aim is to find conditions under
which a central reduced ring is reduced.

\begin{prop}\label{crred} If $R$ is a reduced ring, then $R$ is central reduced.
The converse holds if $R$ satisfies any of the following
conditions.
\begin{enumerate}
    \item[{\rm (1)}]  $R$ is a semiprime ring.
    \item[{\rm (2)}] $R$ is a right (left) principally projective ring.
     \item[{\rm (3)}] $R$ is a right (left) principally quasi-Baer ring.
\end{enumerate}
\end{prop}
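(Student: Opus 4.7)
The plan is to dispatch the forward direction trivially and treat the three converses with a common two-step strategy: first reduce to the case of a square-zero nilpotent, and then kill such an element using the specific hypothesis.

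The forward implication is vacuous, since a reduced ring has no nonzero nilpotents at all. For each converse, assume $R$ is central reduced and let $a\in R$ satisfy $a^n=0$. The first step is to reduce to $n=2$: for $n\geq 3$, the element $b=a^{\lceil n/2\rceil}$ satisfies $b^2=0$ and $\lceil n/2\rceil<n$, so once the square-zero case is settled, a straightforward induction on $n$ forces $a=0$.

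For the square-zero case, under hypothesis~(1) the centrality of $a$ gives $aRa=a^2R=0$ immediately, and semiprimeness finishes the job. Under (2) and (3), I would first check that any central reduced ring is abelian: given an idempotent $e$ and $x\in R$, one has $(ex(1-e))^2=0$, so $ex(1-e)$ is central; multiplying it on the right by $e$ and comparing with the centrality-rewritten form (using $(1-e)e=0$) yields $ex(1-e)=0$, hence $ex=exe$, and symmetrically $xe=exe$, so $e$ is central. With abelianness in hand, suppose $a^2=0$ and note that centrality of $a$ yields $aRa=a^2R=0$. In case~(2), $a\in r_R(a)=eR$ for some idempotent $e$; in case~(3), $a\in r_R(aR)=eR$ for some idempotent $e$. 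Either way, $a\in eR$ supplies $ea=a$, while $e$ lying in the relevant annihilator gives $ae=0$. Centrality of $e$ (by abelianness) then forces $a=ea=ae=0$.

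The main obstacle is organizational rather than conceptual: one has to match up the centrality of $a$ with the correct annihilator so that $a$ actually sits inside it in the quasi-Baer case, and then exploit abelianness to convert $ea=a$ together with $ae=0$ into $a=0$.
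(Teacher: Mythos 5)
Your proposal is correct and follows essentially the same route as the paper: reduce to a square-zero element $a$, note it is central, and then use semiprimeness or the idempotent generating the relevant annihilator to force $a=ea=ae=0$ (your explicit check that $a\in r_R(aR)$ in the quasi-Baer case, via $aRa=a^2R=0$, is exactly what the paper's ``same as (2)'' glosses over). The only difference is your detour through abelianness, which is redundant: since $a$ itself is central, $ea=ae$ already holds, which is all the paper uses.
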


\begin{proof} First statement is clear. Conversely, assume that $R$ is a central
reduced ring and $a\in R$ with $a^2=0$. Then $a$ is central. Now
consider the following cases.

(1) Let $R$ be a semiprime ring. Since $axa=0$ for all $x\in
    R$, it follows that $a=0$. Therefore $R$ is reduced.

(2) Let $R$ be a right principally projective ring. Then there
exists an idempotent $e\in R$ such that $r_R(a)=eR$. Thus
$a=ea=ae=0$, and so  $R$ is reduced. A similar proof may be given
for left principally projective rings.

(3) Same as the proof of (2). \hfill
$\Box$\par\bigskip\par\bigskip\end{proof}

\begin{cor} If $R$ is a central reduced ring, then the following
conditions are  equivalent.
\begin{enumerate}
    \item[{\rm (1)}] $R$ is a right  principally projective ring.
    \item[{\rm (2)}] $R$ is a left principally projective ring.
   \item[{\rm (3)}] $R$ is a right principally quasi-Baer ring.
    \item[{\rm (4)}] $R$ is a left principally quasi-Baer ring.
\end{enumerate}
\end{cor}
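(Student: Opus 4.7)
The plan is to reduce everything to the reduced case via Proposition~\ref{crred}. Since $R$ is assumed central reduced, whenever any one of (1)--(4) holds, Proposition~\ref{crred} (or its left-sided analog, available by the symmetric argument noted in its proof) forces $R$ to be reduced. Hence it suffices to show that in a reduced ring the four annihilator conditions are equivalent.

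For this I would rely on three standard properties of a reduced ring $R$. First, $ab=0$ implies $ba=0$, since $(ba)^2 = b(ab)a = 0$; in particular $r_R(a)=\ell_R(a)$ as subsets. Second, $R$ is semicommutative: if $ab=0$, then for every $r\in R$ one has $(arb)^2 = ar(ba)rb = 0$, whence $arb=0$. This upgrades the previous identity to $r_R(a)=r_R(aR)$ and $\ell_R(a)=\ell_R(Ra)$. Third, $R$ is abelian, so every idempotent $e$ is central and $eR=Re$ (apply semicommutativity to $(1-e)e=0$ to obtain $er(1-e)=0=(1-e)re$).

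With these three facts in hand the equivalences fall out directly. The identity $r_R(a)=r_R(aR)$ gives (1) $\Leftrightarrow$ (3), and symmetrically $\ell_R(a)=\ell_R(Ra)$ gives (2) $\Leftrightarrow$ (4). For (1) $\Leftrightarrow$ (2), suppose $r_R(a)=eR$ with $e^2=e$; centrality of $e$ gives $eR=Re$, and then $\ell_R(a)=r_R(a)=Re$, so $R$ is left principally projective. The reverse direction is symmetric, and the remaining implications (e.g.\ (3) $\Leftrightarrow$ (4)) follow by composing the ones already established.

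I do not anticipate any substantive obstacle. The work is essentially a bookkeeping exercise in the classical annihilator calculus of reduced rings, once Proposition~\ref{crred} has been invoked to collapse the central reduced hypothesis to outright reducedness; the only mild care required is in making sure the left-sided parts of Proposition~\ref{crred} are applied to deduce reducedness from (2) and (4).
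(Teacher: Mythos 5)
Your proposal is correct and follows essentially the same route as the paper: the whole point of the paper's one-line proof is that, by Proposition~\ref{crred}, a central reduced ring satisfying any one of (1)--(4) is reduced, after which the equivalence of the four annihilator conditions for reduced rings is taken as standard. Your explicit verification of that standard equivalence ($ab=0\Rightarrow ba=0$, semicommutativity giving $r_R(a)=r_R(aR)$, centrality of idempotents giving $eR=Re$) simply fills in the details the paper leaves to the reader.
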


\begin{proof} It follows from Proposition \ref{crred} since in either case $R$ is
reduced. \hfill $\Box$\par\bigskip\end{proof}

Note that the homomorphic image of a central reduced  ring need
not be central reduced.  Consider the following example.

\begin{ex}{\rm Let $D$ be a division ring, $R=D[x,y]$  and $I=<x^2>$ where $xy\neq
yx$. Since $R$ is a domain, $R$ is central reduced. On the other
hand,  $x+I$ is a nilpotent element of $R/I$ but not central.
Hence $R/I$ is not central reduced. }
\end{ex}

We now determine under what conditions the homomorphic image of a
central reduced  ring is also central reduced.

\begin{prop} Let $R$ be a central reduced ring.  If $I$ is a nil ideal of $R$, then  $R/I$ is central
reduced. \end{prop}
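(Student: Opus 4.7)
The plan is to take a nilpotent element of $R/I$, lift nilpotence back to $R$ using the fact that $I$ is nil, and then invoke centrality in $R$ to push the conclusion back down.

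More concretely, I would start with an arbitrary nilpotent $\bar{a} = a + I$ in $R/I$, so that $\bar{a}^n = 0$ for some $n \geq 1$. This means $a^n \in I$. Since $I$ is a nil ideal, there exists $m \geq 1$ such that $(a^n)^m = 0$, i.e., $a^{nm} = 0$ in $R$. Hence $a$ itself is nilpotent in $R$. Because $R$ is central reduced, $a$ lies in the center of $R$, so $ar = ra$ for every $r \in R$.

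Now for any $\bar{r} = r + I \in R/I$, reducing modulo $I$ gives $\bar{a}\,\bar{r} = \overline{ar} = \overline{ra} = \bar{r}\,\bar{a}$, so $\bar{a}$ is central in $R/I$. Since $\bar{a}$ was an arbitrary nilpotent element of $R/I$, we conclude that $R/I$ is central reduced.

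The argument is essentially routine once one observes that nil ideals have the property that a coset being nilpotent forces the representative to be nilpotent in $R$. The only ``obstacle,'' and it is a minor one, is noting that this particular lifting property genuinely uses the nilness of $I$ — it would fail for a general ideal (as the example $D[x,y]/(x^2)$ preceding the proposition already shows) — and this is exactly why the hypothesis on $I$ is indispensable rather than cosmetic.
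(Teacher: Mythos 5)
Your proof is correct and follows essentially the same route as the paper's: lift nilpotence of $a+I$ to nilpotence of $a$ in $R$ via the nilness of $I$, use central reducedness of $R$ to conclude $a$ is central, and pass back to $R/I$. Your observation that centrality descends because $ar-ra=0\in I$ is exactly the paper's concluding step, just spelled out a bit more explicitly.
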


\begin{proof} Let $a+I\in R/I$ with $(a+I)^n=\overline 0$ for  some positive integer
$n$. Then $a^n\in I$ and there exists a positive integer $m$ such
that $(a^n)^m=0$. Since $R$ is central reduced, $a$ is central.
Hence $ab-ba\in I$ for all $b\in R$ and so $a+I$ is central.
Therefore $R/I$ is central reduced. \hfill
$\Box$\par\bigskip\end{proof}

It is  well known that a ring is a domain if and only if it is
prime and reduced. In addition to this fact, we have the following
proposition when we deal with central case.

\begin{prop}\label{domain} Let $R$ be a ring. Then $R$ is a domain if and only if
$R$ is a prime and central reduced ring.
\end{prop}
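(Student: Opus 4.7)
The forward direction is routine: if $R$ is a domain, then $R$ is prime (a standard fact, since $aRb=0$ with $a,b\neq 0$ would force, say, $ab=0$ upon choosing $r=1$), and a domain has no nonzero nilpotents at all, so it is reduced and therefore central reduced.

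For the converse, assume $R$ is prime and central reduced, and suppose $ab=0$; I plan to show $a=0$ or $b=0$ by extracting zero elements twice, then invoking primeness. First I would look at $ba$: since $(ba)^{2}=b(ab)a=0$, the element $ba$ is nilpotent, hence central by hypothesis. Centrality lets me rewrite $ba\cdot r\cdot ba$ as $r(ba)^{2}=0$ for every $r\in R$, so $(ba)R(ba)=0$. Primeness of $R$ then forces $ba=0$.

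Next I would use this to upgrade $ab=0$ to $aRb=0$. Pick any $r\in R$ and consider $arb$. Its square expands as $arb\cdot arb=ar(ba)rb$, and since $ba=0$ we get $(arb)^{2}=0$. So $arb$ is nilpotent, hence central, and by the same trick as above $(arb)R(arb)=s(arb)^{2}=0$ for all $s\in R$. Primeness gives $arb=0$; as $r$ was arbitrary, $aRb=0$. A final application of primeness yields $a=0$ or $b=0$, so $R$ is a domain.

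The only step requiring any insight is recognizing that centrality of a square-zero element $c$ collapses $cRc$ to $Rc^{2}=0$, making primeness applicable; this is the mechanism that has to be invoked twice (once to pass from $ab=0$ to $ba=0$, once to pass from $ab=0$ to $aRb=0$). I do not foresee a genuine obstacle beyond keeping the two applications of primeness straight.
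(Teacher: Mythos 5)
Your proof is correct and takes essentially the same route as the paper: the key mechanism in both is that a central square-zero element $c$ satisfies $cRc = Rc^{2} = 0$, which combined with primeness yields $aRb = 0$ and then $a=0$ or $b=0$. The only cosmetic difference is that you first derive $ba=0$ via an extra application of primeness, whereas the paper works directly with the central elements $bra$ to compute $(asb)r(asb) = abras^{2}b = 0$.
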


\begin{proof} First assume  $R$ is a domain. It is clear that $R$ is prime and
reduced and so central reduced. Conversely, assume $R$ is a prime
and central reduced ring. Let $a, b\in R$ with $ab=0$. Then
$rab=0$ for all $r\in R$. Since $(bra)^2=0$, $bra$ and therefore
$bRa$ is contained in the center of $R$. Let $s \in R$ and
$asbrasb \in (asb)R(asb)$ for any $r\in R$. Hence
$asbrasb=abras^2b=0$, since $bra$ is central and $ab=0$. It proves
$(asb)R(asb)=0$. Being $R$ prime, we have $asb=0$ for all $s\in R$
and so $aRb=0$. By invoking the primeness of $R$ again we get
$a=0$ or $b=0$. Therefore $R$ is a domain. \hfill
$\Box$\par\bigskip\end{proof}

It is well known that every reduced ring is semicommutative. In
our case we have the following.

\begin{prop}\label{red-sem} Every central reduced ring is central semicommutative.
\end{prop}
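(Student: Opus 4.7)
The plan is to show directly that if $ab = 0$, then $arb$ is nilpotent for every $r \in R$, and then invoke the central reduced hypothesis to conclude $arb$ is central.

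First I would note the standard trick that $(ba)^2 = b(ab)a = 0$, so $ba$ is a nilpotent element of $R$. Since $R$ is central reduced, $ba$ must lie in the center of $R$. This is the key piece of information that makes the whole argument work.

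Next, for an arbitrary $r \in R$, I would compute
\[
(arb)^2 = ar\,(ba)\,rb.
\]
Because $ba$ is central, it commutes with $r$, so
\[
(arb)^2 = a(ba)\,r\cdot rb = (aba)r^2 b.
\]
Finally, $aba = (ab)a = 0$ since $ab = 0$, giving $(arb)^2 = 0$. Hence $arb$ is nilpotent and therefore central by the defining property of central reduced rings, which is exactly the definition of central semicommutativity.

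There is no real obstacle here: once one notices that $ba$ is nilpotent (and so central) whenever $ab = 0$, the centrality of $ba$ is precisely what lets one slide it past $r$ in the expansion of $(arb)^2$ and collapse the expression using $ab = 0$. The whole proof is essentially a two-line manipulation plus an application of the central reduced hypothesis applied twice — once to $ba$ and once to $arb$.
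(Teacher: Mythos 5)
Your proof is correct and follows essentially the same route as the paper: both observe that $ab=0$ forces $(ba)^2=0$, hence $ba$ is central, use this to collapse $(arb)^2$ to an expression containing $ab$ and thus to $0$, and then apply the central reduced hypothesis once more to conclude $arb$ is central. No gaps.
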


\begin{proof} Let $R$ be a central reduced ring and $a, b\in R$ with
$ab=0$ and $r\in R$. Since $ab=0$, $ba$ is central. So
$barb=rbab=0$. Then $(arb)^2=0$. By hypothesis
 $arb$ is central.
\hfill $\Box$\par\bigskip\end{proof}

For any positive integer $n$ and a ring $R$,  let $T_n(R)$ denote
the $n\times n$ upper triangular matrix ring over the ring $R$ and
$R_n(R)$ denote the subring $\{ (a_{ij})\in T_n(R)\mid $ all
$a_{ii}$ 's are equal for  $i = 1,2,...,n\}$ of $T_n(R)$.

The following example shows that the converse of  Proposition
\ref{red-sem} may not be true in general.

\begin{ex}{\rm Let $F$ be a field. By \cite[Lemma 1.10]{AOH}  $R_3(F)$ is central semicommutative.
We prove that it is not central reduced. Consider  the nilpotent
element $A=\left[
\begin{array}{lll} 0&1&1\\ 0&0&0\\ 0&0&0 \end{array}\right]$. For  $B=\left[
\begin{array}{lll} 1&1&1\\ 0&1&1\\ 0&0&1 \end{array}\right]\in R_3(F)$,  $AB$ is not equal to
$BA$ and so  $R_3(F)$ is  not central reduced. }
\end{ex}

It is clear that prime semicommutative rings are reduced. For
central case we have the following result.
\begin{prop}\label{centralsemcom} Let $R$ be a  prime central semicommutative ring. Then $R$ is  reduced.
\end{prop}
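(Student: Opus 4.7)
The plan is to first show that no nonzero square-zero element exists, and then bootstrap to arbitrary nilpotents. Primeness of $R$ (in the form: $xRy=0$ implies $x=0$ or $y=0$) will be the engine for converting centrality statements into vanishing statements.

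First I would take $a\in R$ with $a^2=0$ and apply central semicommutativity with $b=a$: since $a\cdot a=0$, every element $ara$ is central in $R$. Then $ara$ commutes with every $s\in R$, so $aras=sara$. Multiplying on the right by $a$ collapses the right-hand side using $a^2=0$:
\[
arasa \;=\; sara\cdot a \;=\; s\cdot ara^2 \;=\; 0
\]
for all $r,s\in R$. Reading this as $a\,R\,(asa)=0$ and invoking primeness of $R$ yields $a=0$ or $asa=0$. Holding the latter for every $s$ gives $aRa=0$, and a second application of primeness to $(a,a)$ forces $a=0$. Thus any square-zero element is zero.

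Second, for a general nilpotent $a$ with $a^n=0$, I would take $n\geq 2$ minimal and observe that $b:=a^{n-1}$ satisfies $b^2 = a^{n-1}\cdot a^{n-1}=a^{n-2}\cdot a^{n}=0$ (interpreting $a^0=1$ when $n=2$). The previous step then gives $b=0$, contradicting the minimality of $n$ unless $n=1$, i.e., $a=0$. Hence $R$ has no nonzero nilpotents and is reduced.

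I don't expect any serious obstacle; the one delicate point is the right multiplication by $a$ that turns the centrality identity $aras=sara$ into $a R (asa)=0$, a form on which primeness can act. After that, primeness has to be applied twice in succession (first with $y=asa$, then with $y=a$) to conclude $a=0$, and it is important that both applications are on pairs $(x,y)$ with $x=a$, so no further hypotheses on $R$ are needed.
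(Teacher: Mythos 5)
Your argument is correct and is essentially the paper's: both proofs apply central semicommutativity to a nilpotent to place a set of the form $a^{k}Ra^{l}$ in the center, use the nilpotency to turn the resulting commuting identity into a relation $xRy=0$, and then invoke primeness (twice) to conclude $a=0$. Your reduction of the general case to square-zero elements via $b=a^{n-1}$ (with $b^2=0$) is just a streamlined version of the paper's descent ``if $a^{n-1}=0$, repeat the same technique,'' so the content is the same.
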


\begin{proof} Let $a\in R$ with $a^n = 0$ for some positive integer
$n$. Since $R$ is central semicommutative, $a^{n-1}Ra$ is
contained in the center of $R$. Hence $(axa^{n-1})R(axa^{n-1})=0$
for all $x\in R$. By hypothesis $axa^{n-1}=0$ for all $x\in R$,
and so $aRa^{n-1}=0$. Thus $a=0$ or $a^{n-1}=0$. If $a=0$, then
the proof is completed. If $a^{n-1}=0$, then we also have $a=0$,
by using the similar technique as above. \hfill
$\Box$\par\bigskip\end{proof}

The next example shows that for a ring $R$ and an ideal $I$, if
$R/I$ is central reduced,  then $R$ need not be central reduced.

\begin{ex}{\rm Let $R=\left[ \begin{array}{ll} F&F\\
0&F\end{array}\right]$,  where $F$ is any field. Since $\left[ \begin{array}{ll} 0&1\\
0&0\end{array}\right]$ is a nilpotent but not central element of
$R$, $R$ is not central reduced. Now consider the ideal
$I=\left[ \begin{array}{ll} F&F\\
0&0\end{array}\right]$ of $R$. Then $R/I$ is central reduced
because of the commutativity property of $R/I$. }
\end{ex}

\begin{lem} Let $R$ be a prime ring. If $R/I$ is a central
reduced ring with a reduced ideal $I$, then R is  a reduced ring.
\end{lem}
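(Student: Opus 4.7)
The plan is to show $R$ has no nonzero nilpotents. A standard halving argument reduces the problem to proving the implication $a^2=0\Rightarrow a=0$: once the square-zero case is settled, any $a$ with $a^n=0$ for $n\ge 2$ satisfies $(a^{\lceil n/2\rceil})^2=a^{2\lceil n/2\rceil}=0$, so $a^{\lceil n/2\rceil}=0$, and iterating halves the exponent down to $1$. So I would concentrate entirely on the base case $a^2=0$.

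Assume $a^2=0$. First I would pass to the quotient: the image $\bar a=a+I$ satisfies $\bar a^2=0$, and since $R/I$ is central reduced, $\bar a$ lies in the center of $R/I$. Equivalently, $ar-ra\in I$ for every $r\in R$.

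Next I would combine this centrality modulo $I$ with $a^2=0$ to push $aRa$ first into $I$ and then down to $\{0\}$. For each $r\in R$, since $I$ is a right ideal,
\[
ara=(ar-ra)a+ra^2=(ar-ra)a\in I,
\]
and a direct computation gives
\[
(ara)^2=ar\,a^2\,ra=0.
\]
Thus $ara$ is a nilpotent element of the reduced ideal $I$, so $ara=0$. Consequently $aRa=0$, and the primeness of $R$ forces $a=0$, completing the base case and hence the proof.

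The only delicate step is recognizing that the two pieces $ar$ and $ra$ can be packaged via $a^2=0$ so that $ara$ lands simultaneously in $I$ and is square-zero, which is exactly the hook needed to invoke the reducedness of $I$; everything else, including the final appeal to primeness, is routine.
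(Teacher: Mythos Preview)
Your argument is correct. The reduction to the square-zero case is standard, and the key computation $ara=(ar-ra)a\in I$ together with $(ara)^2=ar\,a^2\,ra=0$ cleanly forces $ara=0$ via the reducedness of $I$, after which primeness finishes the job.

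The paper takes a genuinely different route. Rather than attacking nilpotents of $R$ directly, it first shows that $R$ is \emph{central semicommutative}: given $ab=0$, it uses $(bIa)^2=0$ and reducedness of $I$ to get $bIa=0$, then $(aRb)I=0$; combining this with the fact that $R/I$ is central semicommutative (Proposition~\ref{red-sem}) yields $(arbr_1-r_1arb)^2\in(arbr_1-r_1arb)I=0$, whence $arb$ is central. It then invokes Proposition~\ref{centralsemcom} (prime central semicommutative rings are reduced) to conclude. So the paper lifts a structural property of $R/I$ to $R$ and then quotes an earlier result, whereas you bypass central semicommutativity entirely and work with a single square-zero element. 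Your approach is shorter and self-contained; the paper's approach has the side benefit of establishing that $R$ is central semicommutative along the way, which connects the lemma to the surrounding theory.
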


\begin{proof}Let $R/I$ be a  central reduced ring. By Proposition \ref{red-sem},
 $R/I$  is central
semicommutative. To complete the proof we show that $R$ is central
semicommutative. Let $a, b\in R$ with $ab=0$. Since $bIa\subseteq
I$ and $(bIa)^{2}=0$, $bIa=0$. Therefore $((aRb)I)^{2}=0$ and so
$(aRb)I=0$. Since $R/I$ is  central semicommutative and
$(a+I)(b+I)=I$, $aRb+I\in C(R/I)$, that is, $arbr_{1}-r_{1}arb\in
I$ for all $r,r_{1}\in R$ and so $(arbr_{1}-r_{1}arb)^{2}\in
(arbr_{1}-r_{1}arb)I=0$ by $(aRb)I=0$. Then for all $r,r_{1}\in R$
we have $arbr_{1}=r_{1}arb$ and so $aRb$ is contained in the
center of $R$. Thus $R$ is central semicommutative. By Proposition
\ref{centralsemcom}, $R$ is reduced.
 \hfill $\Box$\par\bigskip\end{proof}

Recall that a ring R is called {\it weakly semicommutative}
\cite{LWL}, if for any $a, b\in R$, $ab = 0$ implies $arb$ is a
nilpotent element for each $r\in R$.

\begin{prop}  Let $R$ be a central reduced ring. Then $R$ is
weakly semicommutative.
\end{prop}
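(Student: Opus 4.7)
The plan is to show directly that if $ab=0$ then $(arb)^2=0$ for every $r\in R$, which is already stronger than what the definition of weakly semicommutative demands. This reduces the statement to a short manipulation that exploits the centrality of nilpotents in $R$.

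First I would observe that the hypothesis $ab=0$ forces $ba$ to be a square-zero element, since $(ba)^2 = b(ab)a = 0$. Because $R$ is central reduced, this nilpotent $ba$ must lie in the center of $R$. This is precisely the same preliminary step used in the proof of Proposition \ref{red-sem}, and it is the only ring-theoretic input I need.

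Next I would compute
\[
(arb)^2 = a r (ba) r b.
\]
Since $ba$ is central, it commutes with $r$, giving
\[
a r (ba) r b = a r^2 (ba) b = a r^2 b (ab) = 0.
\]
Thus $(arb)^2=0$, so $arb$ is nilpotent for every $r\in R$, which is exactly the weakly semicommutative condition of \cite{LWL}.

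There is essentially no obstacle here; the whole argument is a two-line calculation once one notices that $ba$ is central. (As a side remark, central reducedness even yields that $arb$ itself is central, not merely nilpotent, so in fact the proof shows a bit more than stated.)
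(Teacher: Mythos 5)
Your proof is correct and follows essentially the same route as the paper: note that $(ba)^2=b(ab)a=0$ so $ba$ is central, then compute $(arb)^2=ar(ba)rb=ar^2bab=0$. The concluding side remark is also sound, since $arb$ is itself square-zero and hence central, which is exactly the paper's Proposition on central semicommutativity.
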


\begin{proof} Let $a, b\in R$ with $ab=0$. Since
$R$ is central reduced,  $ba$ is  central in $R$. Hence for each
$r\in R$, $(arb)^2=arbarb=ar^2bab=0$. Therefore $R$ is a weakly
semicommutative ring. \hfill $\Box$\par\bigskip\end{proof}

The following example shows that there is a weakly semicommutative
ring which is not central reduced.

\begin{ex}{\rm  In \cite{LWL}, it is proved that $R_5(R)$ is a weakly semicommutative
ring for a ring $R$. Now consider the nilpotent element $a=\left[
\begin{array}{lllll} 0&0&0&1&1\\
0&0&0&0&0\\
0&0&0&0&0\\
0&0&0&0&0\\
0&0&0&0&0
\end{array}\right]$ of $R_5(R)$. Since $ab\neq ba$ for $b=\left[
\begin{array}{lllll} 1&0&0&0&0\\
0&1&0&0&0\\
0&0&1&0&0\\
0&0&0&1&1\\
0&0&0&0&1
\end{array}\right]\in R_5(R)$, $R_5(R)$ is not central reduced.}
\end{ex}

Let $P(R)$ denote the prime radical and $N(R)$ the set of all
nilpotent elements of the ring $R$. The ring $R$ is called {\it
$2$-primal} if $P(R) = N(R)$ (See namely \cite{Hi} and
\cite{HJP}). In \cite[Theorem 1.5]{Sh} it is proved that every
semicommutative ring is $2$-primal. In this direction we prove

\begin{thm}\label{primal}  Every central reduced ring is
2-primal. The converse holds for semiprime rings.
\end{thm}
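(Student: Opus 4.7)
The plan is to prove both inclusions $P(R)\subseteq N(R)$ and $N(R)\subseteq P(R)$. The inclusion $P(R)\subseteq N(R)$ is standard and holds in every ring, since the prime radical is a nil ideal (it equals the intersection of all prime ideals, and any element of it generates an ideal whose powers are eventually squeezed into every prime).

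For the nontrivial inclusion, I would leverage the defining property of a central reduced ring to upgrade a nilpotent \emph{element} to a nilpotent two-sided \emph{ideal}. Given $a\in N(R)$ with $a^{n}=0$, the hypothesis forces $a$ to be central, so $Ra=aR=RaR$ is a two-sided ideal, and centrality also gives $(aR)^{n}=a^{n}R=0$. Hence $aR$ is a nilpotent ideal; since every nilpotent ideal is contained in $P(R)$ (each prime $P$ must contain $aR$ because $(aR)^{n}\subseteq P$), we conclude $a\in aR\subseteq P(R)$. This establishes $N(R)\subseteq P(R)$ and completes the proof that $R$ is $2$-primal.

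For the converse, assume $R$ is a semiprime $2$-primal ring. Semiprimeness means $P(R)=0$, and the $2$-primal hypothesis then gives $N(R)=P(R)=0$, i.e., $R$ is reduced. A reduced ring has no nonzero nilpotents at all, so every nilpotent is trivially central, and $R$ is central reduced.

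There is no real obstacle here; the entire argument pivots on the single observation that centrality promotes the principal right ideal $aR$ to a two-sided ideal whose $n$-th power vanishes. The only thing to be careful about is the direction of the containment of nil ideals in $P(R)$: it is not true in general noncommutative rings that every \emph{nil} ideal sits inside the prime radical, but \emph{nilpotent} ideals always do, and the centrality assumption delivers exactly the nilpotent (not merely nil) conclusion we need.
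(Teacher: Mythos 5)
Your proposal is correct and follows essentially the same route as the paper: the paper also deduces $N(R)\subseteq P(R)$ by noting that a nilpotent element $a$ (central by hypothesis) generates a nilpotent two-sided ideal ($(RaR)^n=0$, so $RaR\subseteq P(R)$), and handles the converse exactly as you do via $P(R)=0$ and $2$-primality giving $N(R)=0$. Your version merely makes explicit the role of centrality in forcing the ideal to be nilpotent rather than just nil, which the paper leaves implicit.
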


\begin{proof}   Let $R$ be a central reduced ring. It is obvious that
$P(R)\subseteq N(R)$. For the converse inclusion, let $a\in N(R)$
with $a^n = 0$ for some positive integer $n$. Then
$(RaR)^n=0\subseteq P(R)$, and so $RaR\subseteq P(R)$. Hence we
have $a\in P(R)$. Therefore $N(R)\subseteq P(R)$.  Conversely, let
$R$ be a semiprime and 2-primal ring. Then $P(R) = 0$ and so $N(R)
= 0$. Hence $R$ is reduced and so central reduced. This completes
the proof. \hfill $\Box$\par\bigskip\end{proof}


\begin{thm} Let $R$ be a ring. Then the following are equivalent.
\begin{enumerate}
    \item[{\rm (1)}] $R$ is central reduced.
   \item[{\rm (2)}] $R/P(R)$ is central reduced with $P(R)\subseteq
   C(R)$ where $C(R)$ is the center of $R$.
\end{enumerate}
\end{thm}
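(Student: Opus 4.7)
The plan is to exploit the fact that $R/P(R)$ is always semiprime, and combine this with Proposition \ref{crred}(1) and Theorem \ref{primal} to reduce both directions to essentially one observation: inside a central reduced ring, the nilpotent elements and the prime radical coincide.

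For $(1)\Rightarrow(2)$, I would start from the assumption that $R$ is central reduced. By Theorem \ref{primal}, $R$ is $2$-primal, so $P(R)=N(R)$; and since every nilpotent element of $R$ is central by definition, this gives $P(R)=N(R)\subseteq C(R)$. For the quotient, suppose $\bar a\in R/P(R)$ satisfies $\bar a^{n}=\bar 0$. Then $a^{n}\in P(R)=N(R)$, so some power of $a^{n}$ vanishes in $R$, forcing $a\in N(R)=P(R)$ and hence $\bar a=\bar 0$. Thus $R/P(R)$ has no nonzero nilpotents, so it is reduced and in particular central reduced.

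For $(2)\Rightarrow(1)$, I would first observe that $R/P(R)$ is semiprime (a standard fact). Combined with the hypothesis that $R/P(R)$ is central reduced, Proposition \ref{crred}(1) yields that $R/P(R)$ is actually reduced. Now take any $a\in R$ with $a^{n}=0$. Passing to the quotient gives $\bar a^{n}=\bar 0$, and since $R/P(R)$ is reduced we conclude $\bar a=\bar 0$, that is, $a\in P(R)\subseteq C(R)$. Hence every nilpotent element of $R$ is central, so $R$ is central reduced.

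There is no real technical obstacle here; the only thing one needs to be careful about is noticing in the $(2)\Rightarrow(1)$ direction that semiprimeness of $R/P(R)$ upgrades \emph{central reduced} to \emph{reduced} via Proposition \ref{crred}(1), which is what collapses the argument. The hypothesis $P(R)\subseteq C(R)$ in (2) is then used only at the final step to conclude that elements of $P(R)$ are central in $R$.
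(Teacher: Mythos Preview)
Your proof is correct and follows essentially the same line as the paper's: both directions hinge on the observation that $R/P(R)$ is in fact reduced, so that any nilpotent of $R$ lands in $P(R)\subseteq C(R)$. The only cosmetic difference is that in $(2)\Rightarrow(1)$ you invoke Proposition~\ref{crred}(1) (semiprime $+$ central reduced $\Rightarrow$ reduced) as a black box, whereas the paper redoes this step by hand, arguing that $\bar x$ central and $\bar x^{\,n}=0$ give $(\overline R\,\bar x\,\overline R)^{n}=0\subseteq P(\overline R)=0$; the content is identical.
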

\begin{proof} (1) $\Rightarrow$ (2) Clear from Theorem
\ref{primal}.

(2) $\Rightarrow$ (1) Let $x\in R$ with $x^n = 0$ for some
positive integer $n$ and $\overline{R}$ denote the ring $R/P(R)$.
Since $\overline{R}$ is central reduced, $\overline{x}=x+P(R)$ is
central in $\overline{R}$. This implies that
$(\overline{R}\overline{x}\overline{R})^n=0\subseteq
P(\overline{R})$, and so
$\overline{R}\overline{x}\overline{R}\subseteq P(\overline{R})$.
Hence $\overline{x}\in P(\overline{R})=0$, thus $x\in P(R)$. By
hypothesis, $x$ is central in $R$. \hfill
$\Box$\par\bigskip\end{proof}

\begin{prop}\label{abel} Every central reduced ring is abelian.
\end{prop}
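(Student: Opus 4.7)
The plan is to show that every idempotent $e \in R$ commutes with every $r \in R$, using the fact that nilpotents are central together with the standard ``Peirce corner'' elements $er(1-e)$ and $(1-e)re$.

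First I would fix an idempotent $e \in R$ and an arbitrary $r \in R$, and observe that both $u := er(1-e)$ and $v := (1-e)re$ square to zero, since $(1-e)e = e(1-e) = 0$. By the central reduced hypothesis, both $u$ and $v$ lie in the center $C(R)$.

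Next I would exploit centrality to force $u = v = 0$. Since $u$ is central, $eu = ue$; but direct computation gives $eu = er(1-e) = u$ and $ue = er(1-e)e = 0$, so $u = 0$, i.e., $er = ere$. Symmetrically, $v$ central yields $ev = ve$, with $ev = 0$ and $ve = v$, so $v = 0$, i.e., $re = ere$. Combining, $er = ere = re$, so $e$ is central and $R$ is abelian.

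There is no real obstacle here; the only thing to be careful about is that the argument formally uses the identity $1 \in R$ (which the paper assumes throughout) in order to form $1-e$. Everything else is a two-line manipulation once one recognizes the standard idempotent trick.
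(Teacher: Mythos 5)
Your proof is correct and is essentially the paper's own argument: the elements $er(1-e)$ and $(1-e)re$ are exactly the paper's $ex-exe$ and $xe-exe$, which square to zero, hence are central, and multiplying by $e$ on the appropriate side forces them to vanish, giving $er=ere=re$.
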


\begin{proof}  Let $R$ be a central reduced ring and $e^2=e\in R$.
 Then $xe-exe$ and $ex-exe$ are central for all
$x\in R$ since $(xe-exe)^2=0$ and $(ex-exe)^2=0$. Hence
$(xe-exe)e=0$ and $e(ex-exe)=0$ for all $x\in R$. So we have
$xe=ex$ for all $x\in R$. Therefore $R$ is abelian. \hfill
$\Box$\par\bigskip\end{proof}


Every abelian ring need not be central reduced, as the following
example shows.

\begin{ex}{\rm Consider the ring
$$R=\left \{ \left[ \begin{array}{ll} a&b\\ c&d \end{array} \right] : a\equiv d~(mod~2), b\equiv c\equiv 0~(mod~2)
\right\}$$ Since $\left[ \begin{array}{ll} 0&0\\ 0&0 \end{array}
\right]$ and $\left[ \begin{array}{ll} 1&0\\ 0&1 \end{array}
\right]$ are the only idempotents of $R$, $R$ is abelian. On the
other hand,  $\left[ \begin{array}{ll} 0&2\\ 0&0 \end{array}
\right]$ is a nilpotent element of $R$ but not central because
$\left[ \begin{array}{ll} 0&2\\ 0&0 \end{array} \right]\left[
\begin{array}{ll} 1&0\\ 0&3 \end{array} \right]\neq \left[ \begin{array}{ll} 1&0\\
0&3 \end{array} \right]\left[ \begin{array}{ll} 0&2\\ 0&0
\end{array} \right]$. Hence $R$ is not central reduced.}
\end{ex}

Recall that a ring $R$ is called {\it directly finite}  whenever
$a, b\in R$, $ab=1$ implies $ba=1$. Then we have the following.

\begin{cor} If $R$ is a central reduced ring, then $R$ is
directly finite.
\end{cor}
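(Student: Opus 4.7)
The plan is to reduce the statement to the well-known fact that every abelian ring is directly finite, since Proposition \ref{abel} has already given us that central reduced rings are abelian. So the whole content of the corollary is: abelian implies directly finite. The key is to leverage the centrality of the idempotent $ba$ arising from a one-sided inverse.

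Concretely, I would start by assuming $a,b \in R$ with $ab = 1$ and setting $e = ba$. A direct calculation gives $e^2 = baba = b(ab)a = ba = e$, so $e$ is idempotent. By Proposition \ref{abel}, $R$ is abelian, hence $e$ lies in the center of $R$. This is the crucial step; everything else is bookkeeping.

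Next I would use centrality of $e$ to show $e = 1$. Observe that $a(1-e) = a - aba = a - (ab)a = a - a = 0$. Because $e$ is central, $(1-e)a = a(1-e) = 0$ as well. Multiplying the identity $ab = 1$ from the left by $1 - e$ yields
\[
1 - e = (1-e)\cdot 1 = (1-e)(ab) = \bigl((1-e)a\bigr)b = 0,
\]
so $ba = e = 1$, as required.

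There is no real obstacle here; the only subtle point is making sure the step ``$(1-e)a = 0$'' genuinely uses the abelian hypothesis, and the calculation above isolates exactly that use. The proof is essentially a one-liner once Proposition \ref{abel} is in hand.
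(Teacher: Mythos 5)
Your proof is correct and follows the same route as the paper: reduce to Proposition \ref{abel} and then use the fact that every abelian ring is directly finite, which the paper simply cites while you verify it directly (via the central idempotent $e=ba$). No gaps; the argument is complete.
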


\begin{proof} Clear from Proposition \ref{abel} since every abelian ring is
directly finite.\hfill $\Box$\par\bigskip\end{proof}

A ring $R$ is called {\it nil clean} \cite{HC} if there exist an
idempotent $e$ and a nilpotent $b$ in $R$ such that $a=e+b$. We
can give a relation between central reduced and commutative rings
by using nil clean rings.

\begin{prop} Let $R$ be a central reduced ring. If $R$ is nil
clean, then it is commutative.
\end{prop}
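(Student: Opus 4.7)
The plan is to combine two facts already established in the paper: in a central reduced ring every nilpotent element is central by definition, and by Proposition \ref{abel} every idempotent is central as well (since central reduced rings are abelian). If I can add to this the hypothesis that every element admits a ``nilpotent plus idempotent'' decomposition, then every element will be a sum of two central elements, which is central.

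Concretely, I would argue as follows. Fix an arbitrary $a\in R$. By the nil clean hypothesis there exist an idempotent $e\in R$ and a nilpotent $b\in R$ with $a=e+b$. Since $R$ is central reduced, $b$ lies in the center $C(R)$. Since $R$ is central reduced it is also abelian by Proposition \ref{abel}, so the idempotent $e$ lies in $C(R)$ too. Hence $a=e+b\in C(R)$. Because $a$ was arbitrary, $R=C(R)$, i.e., $R$ is commutative.

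There is no real obstacle here; the statement is essentially a two-line consequence of the definition of central reduced together with Proposition \ref{abel}. The only thing worth being careful about is to cite the abelian property from Proposition \ref{abel} explicitly, so the reader sees where centrality of $e$ comes from, rather than leaving it as an unstated appeal to ``every idempotent is central.'' No induction, no computation with the nilpotency index of $b$, and no additional hypothesis on $R$ is needed.
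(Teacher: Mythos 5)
Your proposal is correct and matches the paper's own argument: decompose $a=e+b$ via nil cleanness, note $b$ is central by the definition of central reduced and $e$ is central since central reduced rings are abelian (Proposition \ref{abel}), hence $a$ is central. The only difference is cosmetic — the paper compresses the centrality of $e$ and $b$ into a single appeal to the hypothesis, whereas you cite Proposition \ref{abel} explicitly, which is arguably the cleaner exposition.
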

\begin{proof} Let $a\in R$. Since $R$ is nil clean, that is,   an idempotent $e$ and
a nilpotent $b$ exist in $R$ such that $a=e+b$. By hypothesis, $e$
and $b$ are central, and so $a$ is central. \hfill
$\Box$\par\bigskip\end{proof}

 Let $I$ be an index set and $\{R_i\}_{i\in I}$ be a class of rings.
Then $R_i$ is central reduced for all $i\in I$ if and only if
$\prod \limits_{i\in I} R_i$ is central reduced. Then the next
result is an immediate consequence of Proposition \ref{abel}.

\begin{cor} Let $R$ be a ring. Then the following are equivalent.
\begin{enumerate}
    \item $R$ is central reduced.
    \item $R$ is abelian and for any idempotent $e\in R$, $eR$ and
    $(1-e)R$ are central reduced.
    \item There is a central idempotent $e\in R$ with $eR$ and
    $(1-e)R$ are central reduced.
\end{enumerate}
\end{cor}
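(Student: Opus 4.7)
The plan is to run the cycle (1) $\Rightarrow$ (2) $\Rightarrow$ (3) $\Rightarrow$ (1), using as the principal tool the Peirce decomposition $R \cong eR \times (1-e)R$ whenever $e$ is a central idempotent, combined with the product characterization stated immediately before the corollary (central reducedness passes up and down a direct product).

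For (1) $\Rightarrow$ (2), I would start by invoking Proposition \ref{abel} to note that every central reduced ring is abelian, so every idempotent $e \in R$ is automatically central. This lets me form the two corner rings $eR$ and $(1-e)R$ (with units $e$ and $1-e$) and identify $R$ with their direct product via the standard map $r \mapsto (er,(1-e)r)$. Applying the ``only if'' direction of the product characterization, both $eR$ and $(1-e)R$ inherit central reducedness from $R$.

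The step (2) $\Rightarrow$ (3) is essentially a triviality: the hypothesis (2) already presumes $R$ is abelian, so $e = 1$ (or indeed $e = 0$) is a central idempotent, and the conclusion of (2) applied to this idempotent yields precisely what (3) requires, with $(1-e)R$ being the zero ring, which is trivially central reduced.

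Finally, (3) $\Rightarrow$ (1) is the symmetric half: from a central idempotent $e$ with $eR$ and $(1-e)R$ central reduced, I reconstruct $R \cong eR \times (1-e)R$ via the Peirce decomposition, and then the ``if'' direction of the product characterization gives that $R$ is central reduced. There is no real obstacle in any of these steps; the whole corollary is, as the text indicates, an immediate consequence of Proposition \ref{abel} together with the observation about direct products, and the only thing one must be careful about is verifying that the Peirce map is a ring isomorphism (which requires $e$ to be central, available in (1) from abelianness and in (3) by assumption).
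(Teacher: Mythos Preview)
Your proposal is correct and matches the paper's intent exactly: the paper gives no detailed proof, simply remarking that the corollary is ``an immediate consequence of Proposition \ref{abel}'' together with the preceding observation that central reducedness passes to and from direct products. Your cycle (1) $\Rightarrow$ (2) $\Rightarrow$ (3) $\Rightarrow$ (1) via the Peirce decomposition is precisely the argument this remark is pointing to.
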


Recall that a ring $R$ is said to be \emph{regular} if for any $a
\in R$ there exists $b \in R$ with $a = aba$, while a ring $R$ is
called \emph{strongly regular} if for any $a \in R$ there exists
$b \in R$ such that $a = a^2b$.

Now we give some relations between reduced, central reduced,
regular, strongly regular and abelian rings. Also following
theorem provides some conditions for the converses of Proposition
\ref{crred} and Proposition \ref{abel}(1).

\begin{thm} Let $R$ be a ring. Then the following are equivalent.
\begin{enumerate}
    \item[{\rm (1)}] $R$ is strongly regular.
   \item[{\rm (2)}] All $R$-modules are flat and $R$ is central reduced.
    \item[{\rm (3)}] All cyclic $R$-modules are flat and $R$ is central reduced.
    \item[{\rm (4)}] $R$ is regular and central reduced.
    \item[{\rm (5)}] $R$ is regular and  reduced.
    \item[{\rm (6)}] $R$ is regular and abelian.
    \end{enumerate}
\end{thm}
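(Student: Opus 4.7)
The strategy is to reduce all six conditions to the combination ``von Neumann regular plus reduced,'' exploiting the standard characterizations of regularity through flatness of modules together with Proposition \ref{crred} to upgrade ``central reduced'' to ``reduced'' whenever enough structure is present. The six conditions then collapse in a short cycle.

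First I would handle the purely classical equivalences $(1)\Leftrightarrow (5)\Leftrightarrow (6)$: a ring is strongly regular iff it is regular and reduced iff it is regular and abelian. This is standard and I would simply cite it. Next, I would prove the cycle $(2)\Rightarrow (3)\Rightarrow (4)\Rightarrow (5)\Rightarrow (2)$. The implication $(2)\Rightarrow (3)$ is immediate since cyclic modules are modules. For $(3)\Rightarrow (4)$ I would invoke the classical theorem that a ring over which every cyclic (equivalently, every) module is flat is von Neumann regular; combined with the hypothesis of central reducedness this gives (4). For $(5)\Rightarrow (2)$ I would use the other direction of the same classical theorem: a von Neumann regular ring has all modules flat, and reduced trivially implies central reduced.

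The only step that uses anything nontrivial from the paper itself is $(4)\Rightarrow (5)$. Here $R$ is regular and central reduced. A regular ring is semiprime (in fact, every right annihilator of a single element is generated by an idempotent, so $R$ is right principally projective), so Proposition \ref{crred}(1) (or equivalently (2)) applies and yields that $R$ is reduced; combined with regularity this gives (5).

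The main obstacle, such as it is, is simply remembering and citing correctly the two directions of the classical fact ``$R$ is von Neumann regular $\iff$ every (cyclic) left $R$-module is flat''; no long calculation is needed, because Proposition \ref{crred} does the heavy lifting of turning ``central reduced'' into ``reduced'' as soon as any mild regularity-type hypothesis is present.
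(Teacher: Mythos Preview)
Your proposal is correct and close in spirit to the paper's proof, but the organization and the key citation differ slightly. The paper runs a single cycle $(1)\Rightarrow(2)\Rightarrow(3)\Rightarrow(4)\Rightarrow(5)\Rightarrow(6)\Rightarrow(1)$ rather than separating off the classical block $(1)\Leftrightarrow(5)\Leftrightarrow(6)$; in particular it proves $(6)\Rightarrow(1)$ directly (if $a=aba$ and $ab$ is a central idempotent then $a=a^2b$). For the only substantive step $(4)\Rightarrow(5)$, the paper does \emph{not} go through Proposition~\ref{crred}; instead it uses Proposition~\ref{abel}: given $a^2=0$, regularity gives $a=aba$, the idempotent $ab$ is central because central reduced rings are abelian, hence $a=a^2b=0$. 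Your route via Proposition~\ref{crred} (regular $\Rightarrow$ semiprime/right p.p., so central reduced $\Rightarrow$ reduced) is equally valid and arguably cleaner, since it packages the whole argument into a single citation; the paper's route has the small virtue of being self-contained at the element level.
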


\begin{proof} (1) $\Rightarrow$ (2) Since $R$ is strongly regular,
$R$ is central reduced. On the other hand,  $R$ is regular and so
every $R$-module is flat.

(2) $\Rightarrow$ (3) and (3) $\Rightarrow$ (4) Obvious.

(4) $\Rightarrow$ (5) Let $a\in R$ with $a^2=0$. By hypothesis
there exists $b\in R$ such that $a=aba$. Since $ab$ is an
idempotent, we have $a=a^2b=0$ by Proposition \ref{abel}(1). Hence
$R$ is reduced.

(5) $\Rightarrow$ (6) Clear.

(6) $\Rightarrow$ (1) Let $a\in R$. By hypothesis, there exists
$b\in R$ such that $a=aba$. Since $ab$ is an idempotent, $ab$ is
central. Hence $a=a^2b$ and therefore $R$ is strongly regular.
\hfill $\Box$\par\bigskip\end{proof}

Let $S$ denote a multiplicatively closed subset of a ring $R$
consisting of central regular elements. Let $S^{-1}R$ be the
localization of $R$ at $S$. Then we have the following lemma.

\begin{lem}\label{local} A ring $R$ is central reduced if and only if
$S^{-1}R$ is central reduced.
\end{lem}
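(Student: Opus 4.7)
The plan is to use the standard construction of $S^{-1}R$ and exploit the two features of $S$ separately: its elements are central (so they commute past every element of $R$, and the ambiguity in the definition of equality in $S^{-1}R$ is kept ``symmetric''), and its elements are regular (so they can be cancelled from equations in $R$).

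For the forward implication, I would start with an arbitrary nilpotent $\alpha = a/s \in S^{-1}R$, say $\alpha^n = 0$. By the construction of the localization this produces some $t \in S$ with $t a^n = 0$. Because $t$ lies in the center of $R$, I can rewrite this as
$$(ta)^n = t^n a^n = 0,$$
so $ta$ is a nilpotent element of $R$. By hypothesis $ta$ is central in $R$, and this gives
$$t(ab) = (ta)b = b(ta) = (bt)a = t(ba), \qquad \text{i.e.} \ t(ab-ba)=0$$
for every $b \in R$. I would then take an arbitrary $\beta = b/u \in S^{-1}R$ and, using that $s$ and $u$ are central in $R$, verify
$$\alpha\beta - \beta\alpha = \frac{ab}{su} - \frac{ba}{su} = \frac{ab-ba}{su},$$
which vanishes in $S^{-1}R$ precisely because $t \in S$ annihilates its numerator. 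Hence $\alpha$ lies in the center of $S^{-1}R$.

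For the reverse implication, assume $S^{-1}R$ is central reduced and let $a \in R$ satisfy $a^n = 0$. Then $(a/1)^n = 0$ in $S^{-1}R$, so $a/1$ is central there, and for every $b \in R$ we get $ab/1 = ba/1$. By definition of equality in $S^{-1}R$ there is a $t \in S$ with $t(ab-ba) = 0$, and because $t$ is a regular element of $R$ it cancels, leaving $ab = ba$. Thus $a \in C(R)$ and $R$ is central reduced.

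The calculations are essentially bookkeeping; the only genuine care needed is in the forward direction, where it is $ta$ (rather than $a$ itself) that the hypothesis makes central, and one must use the centrality of $t$ both to obtain $(ta)^n = t^n a^n$ and to pass from ``$ta$ central'' to ``$t(ab-ba) = 0$''. Everything else follows by routine manipulation of fractions, with the centrality of $S$ ensuring that one does not need to worry about Ore-type reorderings.
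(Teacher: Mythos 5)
Your proof is correct; the paper dismisses this lemma with ``It is routine'' and gives no argument, and your verification is exactly the standard one it has in mind, handling both directions properly (centrality of $S$ for the fraction arithmetic, regularity for cancelling $t$). One minor simplification is available in the forward direction: since the elements of $S$ are regular, $t a^n = 0$ already forces $a^n = 0$, so $a$ itself is nilpotent and hence central in $R$, which makes the detour through $ta$ and the relation $t(ab-ba)=0$ unnecessary (though entirely harmless).
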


\begin{proof} It is routine.
\hfill $\Box$\par\bigskip\end{proof}

\begin{thm}\label{poly} Let $R$ be a ring. Then the following are
equivalent.\\
{\rm(1)} $R$ is central reduced.\\
{\rm (2)}  $R[x_1,x_2,\ldots,x_n]$ is central reduced.\\
{\rm (3)} $R[[x_1,x_2,\ldots,x_n]]$ is central reduced.\\
{\rm (4)}  $R[x_1,x_1^{-1},x_2,x_2^{-1},\ldots,x_n,x_n^{-1}]$ is
central reduced.
\end{thm}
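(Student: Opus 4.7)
The plan is to handle the three easy implications (2) $\Rightarrow$ (1), (3) $\Rightarrow$ (1), and (4) $\Rightarrow$ (1) in a single stroke: $R$ sits as a subring of each extension, and a nilpotent $a \in R$ is nilpotent in the larger ring, hence central there by hypothesis, and so in particular commutes with every element of $R$. Thus $R$ itself is central reduced.

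For (1) $\Rightarrow$ (2), I would induct on $n$ and reduce to the single-variable case $R[x]$. The essential input is that every central reduced ring is $2$-primal (Theorem \ref{primal}), so $N(R) = P(R)$ is an ideal contained in the center $C(R)$ and $\bar R := R/N(R)$ is reduced. One then uses the elementary fact that polynomial rings over reduced rings are reduced (the leading coefficient of $f^2$ is the square of the leading coefficient of $f$, which must therefore vanish, and one iterates on $\deg f$). Applied to $\bar R$, this tells us that the image in $\bar R[x]$ of any nilpotent $f = \sum a_i x^i \in R[x]$ is zero, so every coefficient $a_i$ lies in $N(R) \subseteq C(R)$; since $x$ commutes with all of $R$, the polynomial $f$ is central in $R[x]$.

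For (1) $\Rightarrow$ (3), the same scheme works once one establishes the power-series analogue: if $S$ is reduced, then $S[[x]]$ is reduced. I would prove this by showing $f^2 = 0 \Rightarrow f = 0$ and then bootstrapping. For $f = \sum a_i x^i$ with $f^2 = 0$, the constant coefficient of $f^2$ is $a_0^2$, forcing $a_0 = 0$; writing $f = xg$ and using that $x$ is central in $S[[x]]$ gives $(xg)^2 = x^2 g^2 = 0$, hence $g^2 = 0$, and inducting on the order of the lowest nonzero coefficient forces every $a_i = 0$. To pass from squares to arbitrary nilpotents one notes that if $f^k = 0$ with $k \geq 2$, then $(f^{k-1})^2 = f^{2k-2} = 0$, so we can descend on $k$. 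The remainder of the argument (passing to $\bar R[[x]]$ and concluding centrality of the coefficients, then inducting on $n$) transfers verbatim from the polynomial case.

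For (1) $\Rightarrow$ (4), once (2) is in hand I would apply Lemma \ref{local} to the multiplicative subset of $R[x_1, \ldots, x_n]$ generated by $x_1, \ldots, x_n$: these are central regular elements, and the localization at them is exactly the Laurent polynomial ring. The main obstacle is the power-series step, since degree-based arguments are unavailable and one has to commute past $x$-factors carefully when peeling; fortunately $x$ is central in $S[[x]]$, which is exactly what makes the peeling-off legitimate in the noncommutative setting.
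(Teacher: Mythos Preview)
Your proposal is correct and takes a genuinely different route from the paper. The paper argues the key implication (1) $\Rightarrow$ (2) by a direct coefficient-by-coefficient computation: given a nilpotent $f(x)=a_0+\cdots+a_nx^n$, it writes out the relations coming from $f(x)^2=0$, $f(x)^3=0$, etc., and shows successively that each $a_i$ has a power equal to zero (hence is central); the general case $f(x)^m=0$ is handled by an informal ``similarly''. For (3) the paper simply asserts the argument is clear, and (2) $\Leftrightarrow$ (4) is done via Lemma \ref{local}, exactly as you do.

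Your approach is more structural: you invoke Theorem \ref{primal} once to get that $N(R)=P(R)$ is an ideal contained in $C(R)$, pass to the reduced quotient $\bar R=R/N(R)$, and then use the classical facts that $\bar R[x]$ and $\bar R[[x]]$ are reduced. This immediately forces every coefficient of a nilpotent $f$ to lie in $N(R)\subseteq C(R)$, and centrality of $f$ follows. The payoff is a uniform, rigorous argument that treats all nilpotency indices at once and transfers verbatim to the power-series case (where the paper gives no details). The paper's approach, by contrast, is self-contained in the sense that it does not rely on Theorem \ref{primal}, but its inductive pattern for general $m$ is only sketched. Your reduction to the reduced case is the cleaner of the two.
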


\begin{proof} The equivalencies of (1), (2) and (3) are clear by
showing that  $R$ is central reduced if and only if $R[x]$ is
central reduced. One way is clear. So assume that $R$ is central
reduced. Let $f(x)=a_0+a_1x+\ldots+a_nx^n \in R[x]$ be nilpotent.
 To complete the proof it is enough to show  $a_0$, $a_1$,
$\ldots, a_n$ are central. If $f(x)^2=0$, then we have

\[
\begin{array}{llll}
 a_0 ^2&=0&\hspace{0.8in}(1)\\
a_0a_1+a_1a_0&=0&\hspace{0.8in}(2)\\
a_0a_2+a_1 ^2 +a_2a_0&=0&\hspace{0.8in}(3) \\
\cdots
\end{array}
\]

\noindent Then $a_{0}$ is central and by (2) we have $2a_0a_1=0$.
(3) implies $2a_0a_2+a_1 ^2=0$. The latter  gives $2a_0a_2=-a_1
^2$. Hence $a_1 ^4=0$. So $a_1$ is central.  We may continue in
this way to obtain $a_2$,$\ldots, a_n$ central. Now assume
$f(x)^3=0$. Then {\small

\[
\begin{array}{rlll}
 a_0 ^3&=0&\hspace{0.8in}(1)\\
a_0 ^2a_1+a_0a_1a_0+a_1a_0 ^2&=0&\hspace{0.8in}(2)\\
a_0 ^2a_2+a_0 a_1 ^2+a_0a_2a_0+a_1a_0a_1+a_1 ^2 a_0+a_2a_0 ^2&=0&\hspace{0.8in}(3) \\
a_0 ^2a_3+a_0 a_1a_2+a_0a_2a_1+a_0a_3a_0+a_1a_0a_2+a_1 ^3+ \\
a_1a_2a_0+a_2a_0a_1+a_2a_1a_0+a_3a_0 ^2&=0&\hspace{0.8in}(4) \\
\cdots
\end{array}
\]
}

\noindent (1) implies $a_0$ is central. Hence from (4) we have
$3a_3a_0 ^2+3a_2a_0a_1+3a_1a_2a_0+a_1^3 =0$. Since $a_0^3=0$ and
$a_0$ is central, $a_1^9=0$ and so $a_1$ is central. Continuing on
this way we get $a_2$,$\ldots, a_n$ central. Similarly if
$f(x)^m=0$, we may prove that all coefficients of  $f(x)$ are
central.

 \noindent (2) $\Rightarrow$ (4) Let $S=\{1,x,x^2,\ldots \}$ be a
 multiplicatively closed subset of $R[x]$. By Lemma \ref{local},
 $R[x,x^{-1}]=S^{-1}R[x]$ is central reduced.

\noindent (4) $\Rightarrow$ (2) Clear.
 \hfill $\Box$\par\bigskip\end{proof}

The following result can be easily obtain from Theorem \ref{poly}.

\begin{cor} Let $R$ be a ring and $G$  a finitely generated free
abelian group. Then the following are equivalent.
\begin{enumerate}
    \item[{\rm (1)}] $R$ is central reduced.
    \item[{\rm (2)}] $RG$ is central reduced.
\end{enumerate}
\end{cor}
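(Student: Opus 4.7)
The proof is essentially a one-line reduction to Theorem \ref{poly}. The plan is to use the structure theorem for finitely generated free abelian groups to identify the group ring $RG$ with a Laurent polynomial ring in finitely many variables, and then apply the equivalence (1) $\Leftrightarrow$ (4) of Theorem \ref{poly} directly.

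First I would write $G \cong \mathbb{Z}^n$ for some nonnegative integer $n$, where $n$ is the rank of $G$. If $\{g_1, g_2, \ldots, g_n\}$ is a free generating set of $G$, then every element of $G$ has the form $g_1^{k_1} g_2^{k_2} \cdots g_n^{k_n}$ with $k_i \in \mathbb{Z}$ uniquely determined. Using this, I would exhibit the ring isomorphism
\[
RG \;\cong\; R[x_1, x_1^{-1}, x_2, x_2^{-1}, \ldots, x_n, x_n^{-1}],
\]
by sending $g_i \mapsto x_i$ (and extending $R$-linearly). This is standard and requires only that one check the assignment respects addition and multiplication, which follows from the fact that $G$ is abelian and freely generated.

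With this identification in hand, the equivalence of (1) and (2) is immediate from Theorem \ref{poly}, which asserts that $R$ is central reduced if and only if $R[x_1, x_1^{-1}, \ldots, x_n, x_n^{-1}]$ is central reduced. The only potential obstacle is the edge case $n = 0$, i.e.\ the trivial group, in which case $RG \cong R$ and the claim is a tautology; this should be mentioned for completeness but presents no difficulty.
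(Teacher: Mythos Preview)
Your proposal is correct and matches the paper's own approach: the paper states that this corollary follows easily from Theorem~\ref{poly}, and your argument makes explicit precisely the intended reduction via the isomorphism $RG \cong R[x_1, x_1^{-1}, \ldots, x_n, x_n^{-1}]$.
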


It is known that reduced rings are nonsingular  and commutative
nonsingular rings are reduced. But in our case  there is no
relation between nonsingular and central reduced rings, as the
following examples show.

\begin{exs} {\rm (1)  The ring $\Bbb Z_4$ is central reduced but not
nonsingular.

 (2)  The ring of $2\times 2$ matrices over a field is
left and
    right nonsingular. On the other hand,  $\left [ \begin{array}{ll} 0&1\\ 0&0 \end{array} \right
    ]$ is a nilpotent element but not central. Thus this ring is
    not central reduced.}
\end{exs}

The {\it Dorroh extension} $D(R, \Bbb Z)=\{(r, n) : r\in R, n\in
\Bbb Z\}$ of a ring $R$ is a ring with operations
$(r_1,n_1)+(r_2,n_2)=  (r_1+r_2,n_1+n_2)$ and
$(r_1,n_1)(r_2,n_2)=(r_1r_2+n_1r_2+n_2r_1,n_1n_2)$. Obviously $R$
is isomorphic to the ideal $\{(r,0) : r\in R\}$ of $D(R, \Bbb Z)$.
Then we obtain  the following.

\begin{prop} A ring $R$ is central reduced if and only if the
 Dorroh extension $D(R, \Bbb Z)$ of $R$ is central reduced.
\end{prop}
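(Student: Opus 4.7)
The plan is to exploit the two natural structural features of the Dorroh extension: the inclusion $R \hookrightarrow D(R,\mathbb{Z})$ given by $r \mapsto (r,0)$ realizing $R$ as an ideal, and the projection $\pi \colon D(R,\mathbb{Z}) \to \mathbb{Z}$, $(r,n) \mapsto n$, which is a unital ring homomorphism onto the reduced ring $\mathbb{Z}$.

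For the $(\Leftarrow)$ direction, I would take a nilpotent $a \in R$, note that $(a,0)^k = (a^k,0)$ (this follows immediately from the multiplication rule since the $\mathbb{Z}$-component is $0$), so $(a,0)$ is nilpotent in $D(R,\mathbb{Z})$ and therefore central by hypothesis. Then for any $r \in R$, comparing $(a,0)(r,0) = (ar,0)$ with $(r,0)(a,0) = (ra,0)$ forces $ar = ra$, so $a \in C(R)$.

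For the $(\Rightarrow)$ direction, suppose $R$ is central reduced and let $(a,n) \in D(R,\mathbb{Z})$ satisfy $(a,n)^k = (0,0)$. Applying $\pi$, we get $n^k = 0$ in $\mathbb{Z}$, so $n = 0$. Thus $(a,n) = (a,0)$ and the expansion $(a,0)^k = (a^k,0) = (0,0)$ forces $a^k = 0$, so $a$ is nilpotent in $R$ and therefore central. Now I would verify directly using the Dorroh multiplication that $(a,0)$ is central in $D(R,\mathbb{Z})$: for arbitrary $(r,m) \in D(R,\mathbb{Z})$,
\[
(a,0)(r,m) = (ar + ma, 0) \quad\text{and}\quad (r,m)(a,0) = (ra + ma, 0),
\]
and these coincide because $ar = ra$.

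The argument is essentially routine; there is no real obstacle beyond being careful about the multiplication formula. The only conceptual point worth flagging is the use of $\pi$ to immediately trivialize the $\mathbb{Z}$-component of any nilpotent, which reduces the whole question to the ideal copy of $R$ inside $D(R,\mathbb{Z})$, where the hypothesis applies directly.
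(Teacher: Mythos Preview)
Your proof is correct and follows essentially the same approach as the paper: use that the second coordinate of a nilpotent in $D(R,\mathbb{Z})$ must vanish (since $\mathbb{Z}$ is reduced), reduce to a nilpotent in the ideal copy of $R$, and invoke the hypothesis. You spell out the converse direction and the centrality verification in more detail than the paper, which simply asserts ``the converse is clear,'' but the underlying argument is identical.
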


\begin{proof}  Let $R$ be a central reduced ring and $(r,n)\in D(R, \Bbb
Z)$ with $(r,n)^m=0$ for  some positive integer $m$. Since
$n^m=0$, it follows that $n=0$ and so $r^m=0$. By hypothesis $r$
is central. Then $(r,n)(s,a)=(s,a)(r,n)$ for any $(s,a)\in D(R,
\Bbb Z)$. Therefore $D(R, \Bbb Z)$ is central reduced. The
converse is clear. \hfill $\Box$\par\bigskip\end{proof}

Let $R$ be a ring and $M$  an $(R,R)$-bimodule. Recall that the
{\it trivial extension} of $R$ by $M$ is defined to be ring
$T(R,M)=R\oplus M$ with the usual addition and the multiplication
$(r_1,m_1)(r_2,m_2)=(r_1r_2, r_1m_2+m_1r_2)$. This ring is
isomorphic to the ring $\left\{ \left[ \begin{array}{ll} r&m\\
0&r
\end{array} \right] : r\in R,\, m\in M \right\}$ with the usual
matrix operations and isomorphic to $R[x]/(x^2)$, where $(x^2)$ is
the ideal generated by $x^2$. The trivial extension of $R$ by $M$
need not be a central reduced ring, as the following example
shows.

\begin{ex} {\rm Let $\Bbb H$ be the division ring of quaternions over the real
numbers. Then $\Bbb H$ is a reduced ring but not commutative.
Consider the nilpotent element $\left[ \begin{array}{ll} 0&i\\
0&0\end{array} \right] $ of $T(\Bbb H,\Bbb H)$. Since  $\left[
\begin{array}{ll} 0&i\\0&0\end{array} \right]\left[
\begin{array}{ll}j&0\\0&j\end{array} \right] \neq\left[
\begin{array}{ll}j&0\\0&j
\end{array} \right]\left[ \begin{array}{ll}0&i\\0&0\end{array} \right]$, $T(\Bbb H,\Bbb
H)$ is not central reduced.}
\end{ex}

It can be easily shown that for a positive integer $n \geq 2$,
$M_n(R)$ and $T_n(R)$ can not be central reduced even if $R$ is
commutative. But we have the following result when we deal with
$T(R,R)$.

\begin{prop} Let $R$ be a ring. Then the following are equivalent.
\begin{enumerate}
    \item[{\rm (1)}] $R$ is commutative.
   \item[{\rm (2)}] $T(R,R)$ is central reduced.
\end{enumerate}
\end{prop}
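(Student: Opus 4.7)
The plan is to prove the two implications separately, with the forward direction being essentially trivial and the reverse direction requiring a carefully chosen nilpotent test element.

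For (1) $\Rightarrow$ (2), I would first observe that when $R$ is commutative, a direct computation on the multiplication rule $(r_1,m_1)(r_2,m_2)=(r_1r_2,r_1m_2+m_1r_2)$ shows that $T(R,R)$ is commutative, and hence every element of $T(R,R)$, in particular every nilpotent, is central. So this direction is immediate.

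For (2) $\Rightarrow$ (1), the plan is to produce a canonical family of nilpotent elements whose centrality in $T(R,R)$ forces commutativity of $R$. Concretely, for any $m\in R$, the element $\left[\begin{array}{ll} 0 & m \\ 0 & 0 \end{array}\right]$ squares to zero and is therefore nilpotent, so by the assumption that $T(R,R)$ is central reduced it lies in the center of $T(R,R)$. I would then test this centrality against the scalar element $\left[\begin{array}{ll} r & 0 \\ 0 & r \end{array}\right]$ for arbitrary $r\in R$; the products on the two sides have $(1,2)$-entries $rm$ and $mr$ respectively, so equating them yields $rm=mr$ for all $r,m\in R$, i.e.\ $R$ is commutative.

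The only mild subtlety is bookkeeping: one must make sure that the chosen test element $\left[\begin{array}{ll} r & 0 \\ 0 & r \end{array}\right]$ actually lies in $T(R,R)$ under the chosen matrix model, which it clearly does since it corresponds to the pair $(r,0)\in R\oplus R$. No further obstacles arise; this short argument handles the proposition in its entirety.
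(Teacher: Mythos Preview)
Your proposal is correct and follows essentially the same argument as the paper: the forward direction uses that commutativity of $R$ makes $T(R,R)$ commutative, and the reverse direction tests the nilpotent $\left[\begin{smallmatrix}0&m\\0&0\end{smallmatrix}\right]$ against the scalar $\left[\begin{smallmatrix}r&0\\0&r\end{smallmatrix}\right]$ to force $rm=mr$. The paper's proof is identical up to the names of the variables.
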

\begin{proof} (1) $\Rightarrow$ (2) By hypothesis, $T(R,R)$ is
commutative, and so it is central reduced.

(2) $\Rightarrow$ (1) Let $x, y\in R$. Since $\left[ \begin{array}{ll} 0&x\\
0&0\end{array} \right]\in T(R,R)$ is nilpotent, it commutes with $\left[ \begin{array}{ll} y&0\\
0&y\end{array} \right]\in T(R,R)$. Hence we have $xy=yx$. \hfill
$\Box$\par\bigskip\end{proof}

 Let $R$ be a ring and $f(x)=\sum \limits_{i=0}^n a_ix^i$,
$g(x)=\sum \limits_{j=0}^s b_jx^j \in R[x]$. Rege and Chhawchharia
\cite{RC} introduce the notion of an Armendariz ring, that is,
$f(x)g(x)=0$ implies $a_ib_j=0$ for all $i$ and  $j$. The name of
the ring was given due to Armendariz who proved that reduced rings
satisfied this condition \cite{AE}. The interest of this notion
lies in its natural and useful role in understanding the relation
between the annihilators of the ring R and the annihilators of the
polynomial ring $R[x]$. So far,  Armendariz rings are generalized
in different ways (see namely, \cite{HKKwak}, \cite{LZh}). In
particular, a ring $R$ is called {\it linear Armendariz}
\cite{LW}, if the product of two linear polynomials in $R[x]$ is
zero, then each product of their coefficients is zero.  A ring $R$
is called {\it central linear Armendariz} \cite{AHH},  if the
product of two linear polynomials in $R[x]$ is zero, then each
product of their coefficients is central. According to Harmanci et
al. \cite{AGHH}, a ring $R$ is called {\it central Armendariz} if
 $f(x)g(x)=0$ implies that $a_ib_j$ is central element of $R$ for all $i$ and $j$.
A ring $R$ is called {\it weak Armendariz} \cite{LZh}  if
$f(x)g(x) = 0$, then $a_i b_j$ is a nilpotent element of $R$ for
each $i$ and $j$, while a ring $R$ is called {\it nil-Armendariz}
\cite{An} if $f(x)g(x)$ has nilpotent coefficients, then $a_ib_j$
is nilpotent for $0\leq i\leq n$, $0\leq j\leq s$. Clearly every
nil-Armendariz ring is weak Armendariz. In \cite[Theorem 5]{AC1},
Anderson and Camillo proved that for a ring $R$ and \linebreak $n
\geq 2$ a natural number, $R[x]/(x^n)$ is Armendariz if and only
if R is reduced.   For central reduced rings, we have

\begin{thm}\label{thm6} Let $R$ be a right principally  projective ring and $n\geq 2$ a natural number.
Then $R$ is central reduced  if and only if  $R[x]/(x^n)$ is
central Armendariz.
\end{thm}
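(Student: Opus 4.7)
The plan is to reduce, via Proposition \ref{crred}(2), to the statement that under the hypothesis that $R$ is right principally projective, $R$ is reduced if and only if $R[x]/(x^n)$ is central Armendariz. The forward direction is then immediate: if $R$ is central reduced and right principally projective, Proposition \ref{crred}(2) gives that $R$ is reduced; the Anderson--Camillo theorem \cite[Theorem 5]{AC1} then yields that $R[x]/(x^n)$ is Armendariz; and any Armendariz ring is automatically central Armendariz because zero is central in every ring.

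For the converse, I would set $S = R[x]/(x^n)$ and let $\bar{x}$ denote the image of $x$, so that $\bar{x}$ is central in $S$ and $\bar{x}^{2(n-1)} = 0$ since $n \geq 2$. Given $a \in R$ with $a^2 = 0$, I would consider the polynomials
\[
F(y) = a - \bar{x}^{n-1} y, \qquad G(y) = a + \bar{x}^{n-1} y
\]
in $S[y]$ and check directly that $F(y)G(y) = a^2 + (a\bar{x}^{n-1} - \bar{x}^{n-1}a)\,y - \bar{x}^{2(n-1)} y^2 = 0$. Since $S$ is central Armendariz, the coefficient product $a\bar{x}^{n-1}$ must be central in $S$. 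For any $r \in R$, this centrality rearranges (using that $\bar{x}^{n-1}$ commutes with $r$) to $(ar - ra)\bar{x}^{n-1} = 0$ in $S$, which lifts to $(ar - ra)x^{n-1} \in (x^n)$ in $R[x]$; comparing the coefficient of $x^{n-1}$ forces $ar = ra$. Hence every square-zero element of $R$ is central.

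To finish, I would reuse the argument from Proposition \ref{crred}(2): writing $r_R(a) = eR$ for an idempotent $e$, the relation $a^2 = 0$ gives $a = ea$, while centrality of $a$ combined with $ae = 0$ (from $e \in r_R(a)$) gives $a = ae = 0$. Thus every square-zero element vanishes, and a routine halving induction on the index of nilpotency propagates this to all nilpotents; hence $R$ is reduced and in particular central reduced. I expect the main obstacle to be the choice of $F$ and $G$: they must be rigged so that $FG = 0$ while one of the forced-central coefficient products (here $a\bar{x}^{n-1}$) secretly encodes the commutator $[a,r]$ once the high power of $\bar{x}$ is stripped off. Once the right polynomials are in hand, the remainder is degree bookkeeping in $R[x]$ and a direct appeal to the principally projective hypothesis via Proposition \ref{crred}.
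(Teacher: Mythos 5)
Your proposal is correct, and its forward direction coincides with the paper's (Proposition \ref{crred}(2) to pass from central reduced to reduced, then \cite[Theorem 5]{AC1}, then the trivial observation that Armendariz implies central Armendariz). Where you genuinely diverge is the converse. The paper stays at the level of ring classes: it invokes \cite[Theorem 2.5]{AGHH} to upgrade ``$R[x]/(x^n)$ is central Armendariz'' to ``$R[x]/(x^n)$ is Armendariz,'' and then applies Anderson--Camillo again to conclude that $R$ is reduced, hence central reduced. You instead argue by hand inside $S=R[x]/(x^n)$: for $a\in R$ with $a^2=0$ you test central Armendariz on the single pair $F(y)=a-\bar{x}^{\,n-1}y$, $G(y)=a+\bar{x}^{\,n-1}y$, whose product vanishes because $\bar{x}$ is central, $a^2=0$ and $\bar{x}^{\,2(n-1)}=0$ for $n\geq 2$; centrality of the coefficient product $a\bar{x}^{\,n-1}$ then yields $(ar-ra)x^{n-1}\in(x^n)$, and comparing the degree-$(n-1)$ coefficient gives $ar=ra$, so every square-zero element of $R$ is central. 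The right principally projective hypothesis then enters exactly as in Proposition \ref{crred}(2) ($a=ea$, $ae=0$, $a$ central, so $a=0$), and the halving induction finishes the reduction to reduced. Your route buys self-containedness: it uses only the ``central linear Armendariz'' strength of the hypothesis, makes explicit where the commutator $[a,r]$ is extracted, and confines the principally projective assumption to the last step, whereas the paper's argument is shorter but delegates the key implication to an external theorem whose hypotheses must be checked for the quotient ring $R[x]/(x^n)$ rather than for $R$ itself. Both arguments are valid proofs of the stated equivalence.
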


\begin{proof} Suppose $R$ is a central reduced  ring. By Proposition
\ref{crred},  $R$ is a reduced ring. From \cite[Theorem 5]{AC1},
$R[x]/(x^n)$ is Armendariz and so central Armendariz. Conversely,
assume that $R[x]/(x^n)$ is central Armendariz. By hypothesis and
\cite[Theorem 2.5]{AGHH},   $R[x]/(x^n)$ is  Armendariz. It
follows from \cite[Theorem 5]{AC1} that $R$ is reduced and so
central reduced. \hfill $\Box$\par\bigskip\end{proof}

Central reduced rings allow us to get the following result.

\begin{thm}\label{burcu} Let $R$ be a central reduced ring. Then  the
followings hold.
\begin{enumerate}
    \item[{\rm (1)}] $R$ is nil-Armendariz.
    \item[{\rm (2)}] $R$  is weak Armendariz.
    \item[{\rm (3)}] $R$  is central Armendariz.
  \end{enumerate}
\end{thm}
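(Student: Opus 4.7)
The plan is to exploit the fact (proved in Theorem \ref{primal}) that a central reduced ring $R$ is $2$-primal, so $N(R)=P(R)$ is an ideal and the quotient $\bar R=R/P(R)$ is reduced. Once we have this, all three statements reduce to Armendariz's classical theorem that reduced rings are Armendariz.

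For (1), suppose $f(x)=\sum a_i x^i$, $g(x)=\sum b_j x^j\in R[x]$ and that every coefficient $c_k=\sum_{i+j=k}a_ib_j$ of $f(x)g(x)$ lies in $N(R)$. Passing to $\bar R[x]$ (where $\bar R=R/P(R)$), the product $\overline{f(x)}\,\overline{g(x)}$ has all coefficients zero, i.e., $\overline{f(x)}\,\overline{g(x)}=0$. Since $\bar R$ is reduced, it is Armendariz, so $\overline{a_i}\,\overline{b_j}=\bar 0$ for all $i,j$. This means $a_ib_j\in P(R)=N(R)$, so each $a_ib_j$ is nilpotent, establishing that $R$ is nil-Armendariz.

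Statement (2) is now immediate: if $f(x)g(x)=0$ in $R[x]$, then the coefficients of $f(x)g(x)$ are all $0$, hence trivially nilpotent, so (1) applies and each $a_ib_j$ is nilpotent. For (3), assume $f(x)g(x)=0$. By (2), each $a_ib_j$ is a nilpotent element of $R$, and because $R$ is central reduced, every nilpotent element is central; thus each $a_ib_j$ lies in $C(R)$, which is precisely the defining condition for $R$ to be central Armendariz.

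There is essentially no obstacle here, since each of the three claims follows by a short reduction once Theorem \ref{primal} is in hand; the only care needed is in (1), where one must invoke Armendariz's theorem in the quotient $R/P(R)$ rather than in $R$ itself, and then lift the conclusion back via $P(R)=N(R)$.
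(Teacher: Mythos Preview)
Your proof is correct and follows essentially the same route as the paper: both use Theorem~\ref{primal} to get that $R$ is $2$-primal (so $N(R)$ is an ideal), deduce nil-Armendariz, and then read off weak Armendariz and central Armendariz. The only difference is cosmetic: the paper cites Antoine's result (\cite[Proposition~2.1]{An}) that a ring with $N(R)$ an ideal is nil-Armendariz, whereas you unpack that citation by passing to the reduced quotient $R/P(R)$ and invoking Armendariz's original theorem there---which is precisely how Antoine's proposition is proved.
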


\begin{proof} If $R$ is central reduced, then it is
2-primal  by Theorem \ref{primal} and so  $N(R)$  is an ideal of
$R$. Proposition 2.1 in \cite{An} states that in a ring in which
the set of all nilpotent elements forms an ideal, then the ring is
nil-Armendariz. Therefore $R$ is weak Armendariz and central
Armendariz. \hfill $\Box$\par\bigskip\end{proof}

\begin{cor} If $R$ is a central reduced ring, then  $R[x]/(x^n)$ is
nil-Armendariz.
\end{cor}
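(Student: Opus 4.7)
The plan is to reduce the problem to an application of Proposition 2.1 of \cite{An}, which says that any ring whose nilpotent elements form an ideal is nil-Armendariz. Setting $S=R[x]/(x^n)$, I would prove directly that $N(S)$ is an ideal of $S$, which is enough.

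First I would characterize the nilpotent elements of $S$. I claim that an element $\bar f = a_0+a_1x+\cdots+a_{n-1}x^{n-1}\in S$ is nilpotent if and only if $a_0\in N(R)$. The forward direction is immediate: the constant term of $\bar f^{k}$ is $a_0^{k}$, so $\bar f^{k}=0$ forces $a_0^{k}=0$. For the converse, suppose $a_0\in N(R)$. Since $R$ is central reduced, $a_0$ is central in $R$, hence central in $S$; in particular $a_0$ commutes with $\beta:=a_1x+\cdots+a_{n-1}x^{n-1}$. Because $\beta\in xS$ and $x^n=0$ in $S$, the element $\beta$ is nilpotent of index at most $n$. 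If $a_0^m=0$, the binomial expansion gives
\[
\bar f^{\,m+n-1}=\sum_{k=0}^{m+n-1}\binom{m+n-1}{k}a_0^{\,m+n-1-k}\beta^{k},
\]
and every term vanishes (either $\beta^k=0$ when $k\ge n$, or $a_0^{\,m+n-1-k}=0$ when $k<n$), so $\bar f$ is nilpotent.

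Next I would show that $N(S)$ is an ideal. By Theorem~\ref{primal}, $R$ is $2$-primal, so $N(R)=P(R)$ is an ideal of $R$. Given $\bar f,\bar g\in N(S)$ with constant terms $a_0,b_0\in N(R)$, the constant term of $\bar f+\bar g$ is $a_0+b_0\in N(R)$, so $\bar f+\bar g\in N(S)$ by the characterization above. Similarly, for $\bar h\in S$ with constant term $h_0$, the constant term of $\bar h\bar f$ is $h_0a_0\in N(R)$ and of $\bar f\bar h$ is $a_0h_0\in N(R)$, so both products lie in $N(S)$. Thus $N(S)$ is a two-sided ideal of $S$, and \cite[Proposition 2.1]{An} yields that $S=R[x]/(x^n)$ is nil-Armendariz.

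The main obstacle is the converse half of the characterization of $N(S)$: one must argue that $a_0\in N(R)$ alone is enough for $\bar f$ to be nilpotent, even though $a_1,\ldots,a_{n-1}$ are arbitrary. Without central reducedness $a_0$ need not commute with $\beta$, and the clean binomial expansion above would fail; it is exactly at this step that the hypothesis ``every nilpotent is central'' is used essentially, rather than merely the weaker $2$-primal property.
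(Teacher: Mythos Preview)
Your argument is correct, but it follows a different route from the paper's. The paper simply observes that $R$ is nil-Armendariz by Theorem~\ref{burcu} and then invokes \cite[Proposition~4.1]{An}, which transfers the nil-Armendariz property from $R$ to $R[x]/(x^n)$. You instead work directly inside $S=R[x]/(x^n)$: you characterize $N(S)$ explicitly as the set of elements with nilpotent constant term (using central reducedness to justify the binomial expansion), deduce from Theorem~\ref{primal} that $N(S)$ is an ideal, and then apply \cite[Proposition~2.1]{An} to $S$ rather than to $R$. Your approach is more self-contained in that it avoids the black-box transfer result \cite[Proposition~4.1]{An}, at the modest cost of the explicit computation of $N(S)$; the paper's proof is shorter but leans on an additional external citation. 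Both are valid, and your explicit description of $N(S)$ is in fact a special case of what underlies \cite[Proposition~4.1]{An}.
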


\begin{proof} If $R$ is central reduced, then it is nil-Armendariz
by Theorem \ref{burcu}.  From \cite[Proposition 4.1]{An},
 $R[x]/(x^n)$ is  nil-Armendariz.
 \hfill $\Box$\par\bigskip\end{proof}

 We now give a useful lemma without proof to show that the trivial extension of
 a central reduced ring is central Armendariz.

\begin{lem}\label{attr} The following hold for a ring $R$ with
$a,b \in R$.
\begin{enumerate}
\item[{\rm (1)}] The sum of central nilpotent elements of $R$ is nilpotent.
\item[{\rm (2)}] If  $b$ is central nilpotent, then $ba$ and $ab$ are  nilpotent.
\item[{\rm (3)}] If  $aba$ is central nilpotent, then $ab$ and $ba$ are nilpotent.
\end{enumerate}
\end{lem}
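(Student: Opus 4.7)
The plan is to handle each of the three claims in turn, exploiting centrality at every step so that ordinarily noncommuting factors can be reordered freely.

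For (1), let $x,y\in R$ be central nilpotents with $x^n=0$ and $y^m=0$. Since $x$ is central it in particular commutes with $y$, so the ordinary binomial theorem applies to $x+y$. I would expand $(x+y)^{n+m-1}$ as $\sum \binom{n+m-1}{i} x^i y^{n+m-1-i}$; in each summand either $i\geq n$ or $n+m-1-i\geq m$, so every term vanishes. Thus $x+y$ is nilpotent.

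For (2), if $b$ is central with $b^n=0$, then because $b$ commutes with $a$ I can collect factors: $(ab)^n=a^n b^n=0$ and $(ba)^n=b^n a^n=0$. So $ab$ and $ba$ are nilpotent.

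The real content is (3), and it is the only step that requires a genuine idea rather than pushing centrality symbolically. Set $c=aba$ and suppose $c$ is central with $c^n=0$. The key observation is the identity
\[
(ab)^2 = abab = (aba)b = cb, \qquad (ba)^2 = baba = b(aba) = bc = cb,
\]
where the last equality uses that $c$ is central. Hence $(ab)^2=(ba)^2=cb$. Then, again using centrality of $c$, I can slide every occurrence of $c$ past $b$, giving
\[
(ab)^{2n} = (cb)^n = c^n b^n = 0, \qquad (ba)^{2n} = (cb)^n = c^n b^n = 0,
\]
so both $ab$ and $ba$ are nilpotent. The main obstacle is spotting the grouping that produces $aba$ as a factor of $(ab)^2$ and $(ba)^2$; once that identity is in hand, centrality of $aba$ reduces the problem to the trivial case already handled in (2).
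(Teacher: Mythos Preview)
The paper explicitly introduces this lemma ``without proof,'' so there is no argument to compare your proposal against; your task was simply to supply a valid proof, and you have done so. All three parts are correct: in (1) centrality of $x$ gives $xy=yx$, so the binomial expansion is legitimate and each term $x^iy^{n+m-1-i}$ vanishes; in (2) centrality of $b$ lets you collect $(ab)^n=a^nb^n=0$; and in (3) the grouping $(ab)^2=(aba)b=cb$ and $(ba)^2=b(aba)=bc=cb$ is exactly the right observation, after which $(cb)^n=c^nb^n=0$ by centrality of $c$.

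One small remark on (1): the lemma as stated covers arbitrary (finite) sums of central nilpotents, while you treat only two summands. This is harmless, since the center of $R$ is a subring, so $x+y$ is again central and nilpotent, and induction handles any finite sum. You might add a sentence to that effect for completeness.
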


Note that if $R$ is a reduced ring, by \cite[Proposition 2.5]{RC}
trivial extension $T(R,R)$ is Armendariz and so it is central
Armendariz. In \cite[ Lemma 2.18] {AHH}, it is proved that for a
central reduced ring $R$,  the trivial extension $T(R,R)$ is
central linear Armendariz. Here we extend this result to central
Armendariz rings. Note that in proving Theorem \ref{trr} we use
the results in Lemma \ref{attr} without mention.

\begin{thm}\label{trr}  If $R$ is a central reduced ring, then the trivial extension $T(R,R)$ is central  Armendariz.

\end{thm}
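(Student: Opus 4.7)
My plan is to identify $T(R,R)$ with $R[y]/(y^2)$ and split $FG=0$ into its two $y$-components in $R[x]$, then transport the problem to the reduced quotient $\bar R := R/N(R)$, where the classical Armendariz theorem \cite{AE} applies. Writing $F(x) = f(x) + f'(x)y = \sum_i A_i x^i$ and $G(x) = g(x) + g'(x)y = \sum_j B_j x^j$ with $f,f',g,g' \in R[x]$ and $A_i = a_i + a'_iy$, $B_j = b_j + b'_jy$ (the primes are labels, not derivatives), the equation $FG=0$ unpacks, using $y^2 = 0$ and the centrality of $y$ in $T(R,R)$, into the two identities $fg = 0$ and $fg' + f'g = 0$ in $R[x]$. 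A short direct check shows that $u + vy \in T(R,R)$ is central iff $u,v \in C(R)$, so it suffices to prove $a_ib_j$ and $c_{ij} := a_ib'_j + a'_ib_j$ lie in $C(R)$ for all $i,j$.

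By Theorem \ref{primal} together with Lemma \ref{attr}, $R$ is 2-primal, so $N(R)$ is an ideal of $R$ and $\bar R = R/N(R)$ is reduced. Reducing the two polynomial identities modulo $N(R)[x]$ gives $\bar f \bar g = 0$ and $\bar f \bar{g'} + \bar{f'}\bar g = 0$ in $\bar R[x]$. Since $\bar R[x]$ is reduced, hence semicommutative, $\bar f \bar g = 0$ implies $\bar f\,\bar R[x]\,\bar g = 0$; multiplying the second identity on the right by $\bar g$ then yields $\bar{f'}\bar g^2 = 0$ (the summand $\bar f\bar{g'}\bar g$ vanishes by the semicommutativity consequence). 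Setting $u = \bar{f'}\bar g$, semicommutativity applied to $u\bar g = 0$ gives $u\bar{f'}\bar g = u^2 = 0$, and reducedness of $\bar R[x]$ forces $u = 0$; hence $\bar{f'}\bar g = 0$, and consequently $\bar f\bar{g'} = 0$ as well.

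Now $\bar R$ is reduced, hence Armendariz \cite{AE}; applying this to $\bar f\bar g = \bar f\bar{g'} = \bar{f'}\bar g = 0$ gives $\bar{a_i}\bar{b_j} = \bar{a_i}\bar{b'_j} = \bar{a'_i}\bar{b_j} = 0$ for every $i,j$, i.e., each of $a_ib_j$, $a_ib'_j$, $a'_ib_j$ lies in $N(R)$. Closure of $N(R)$ under addition (Lemma \ref{attr}(1)) then yields $c_{ij} \in N(R)$, and since $R$ is central reduced, $N(R) \subseteq C(R)$; so $a_ib_j, c_{ij} \in C(R)$. The centrality criterion from the first paragraph then gives $A_iB_j \in C(T(R,R))$, completing the proof.

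I expect the main obstacle to be the deduction of $\bar{f'}\bar g = 0$ from the weaker $\bar{f'}\bar g^2 = 0$; it uses both reducedness and semicommutativity of $\bar R$ in an essential way, and this is exactly where being central reduced (via 2-primality) is used. Everything else is coefficient bookkeeping through the isomorphism $T(R,R) \cong R[y]/(y^2)$.
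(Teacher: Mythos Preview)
Your proof is correct and takes a genuinely different, more conceptual route than the paper. The paper works entirely inside $R$: after reading off $f_1g_1=0$ and $f_1g_2+f_2g_1=0$ it invokes Theorem~\ref{burcu} to get $a_ib_j$ central nilpotent, and then runs a fairly lengthy induction on $i+j$, multiplying the $(i+j)$-th coefficient equation by suitable $a_k$ or $b_l$ from left or right and repeatedly applying Lemma~\ref{attr} to peel off terms one at a time until every $a_kb'_l$, $a'_kb_l$ is shown central nilpotent. Your approach instead passes to the reduced quotient $\bar R=R/N(R)$ (available by 2-primality, Theorem~\ref{primal}) and exploits that $\bar R[x]$ is reduced: the short semicommutativity/reducedness trick $u\bar g=0\Rightarrow u^2=0\Rightarrow u=0$ with $u=\bar{f'}\bar g$ replaces the paper's entire induction, and then the classical Armendariz theorem for reduced rings \cite{AE} delivers all coefficient products in $N(R)\subseteq C(R)$ at once. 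This is shorter and makes transparent exactly where the central reduced hypothesis enters (via $N(R)$ being a central ideal); the paper's approach, by contrast, is self-contained in that it does not appeal to Armendariz's theorem for reduced rings, using only the nil-Armendariz property of $R$ itself. A minor cosmetic point: your invocation of Lemma~\ref{attr} for additive closure of $N(R)$ is redundant once you have 2-primality, since then $N(R)=P(R)$ is already an ideal.
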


\begin{proof} Let\\ $f(x)=\left [
\begin{array}{cc}
a_0 & a'_0\\
0 & a_0 \end{array}\right]+ \left [
\begin{array}{cc}
a_1 & a'_1\\
0 & a_1 \end{array}\right]x +\ldots + \left [
\begin{array}{cc}
a_n & a'_n\\
0 & a_n \end{array}\right]x^n =\left [
\begin{array}{cc}
f_{1}(x) & f_{2}(x)\\
0 & f_{1}(x) \end{array}\right]$,\\ $g(x)=\left [
\begin{array}{cc}
b_0 & b'_0\\
0 & b_0 \end{array}\right]+ \left [
\begin{array}{cc}
b_1 &b'_1\\
0 & b_1 \end{array}\right]x + \ldots + \left [
\begin{array}{cc}
b_t & b'_t\\
0 & b_t \end{array}\right]x^t = \left [
\begin{array}{cc}
g_{1}(x) & g_{2}(x)\\
0 & g_{1}(x) \end{array}\right]$ be elements of $T(R,R)[x]$, where
$f_1(x) = a_0 + a_1x + ... + a_nx^n$, $f_2(x) = a'_0 + a'_1x + ...
+ a'_nx^n$, $g_1(x) = b_0 + b_1x + ... + b_tx^t$, $g_2(x) = b'_0 +
b'_1x + ... + b'_tx^t$. Assume that $f(x)g(x)=0$. We have
 $$f(x)g(x)=\left [
\begin{array}{cc}
 f_{1}(x)g_{1}(x)& f_{1}(x)g_{2}(x)+f_{2}(x)g_{1}(x)\\
0 & f_{1}(x)g_{1}(x)
\end{array}\right]=0.$$  Hence $f_{1}(x)g_{1}(x)=0$ and
$f_{1}(x)g_{2}(x)+f_{2}(x)g_{1}(x)=0$. By Theorem \ref{burcu}, $R$
is both nil-Armendariz and central Armendariz. Hence $a_ib_j$ and
$b_ja_i$ are central nilpotent in $R$  for all $i,j$. As for the
coefficients of $f_{1}(x)g_{2}(x)+f_{2}(x)g_{1}(x)=0$, we have
\[
\begin{array}{llll}
 a_0b'_0 + a'_0b_0&=0&\hspace{0.8in}(1)\\
a_0b'_1+a_1b'_0 + a'_0b_1 + a'_1b_0&=0&\hspace{0.8in}(2)\\
a_0b'_2+a_1b'_1 + a_2b'_0 + a'_0b_2 + a'_1b_1 +
a'_2b_0&=0&\hspace{0.8in}(3) \\
\cdots
\end{array}
\]
\noindent Multiplying (1) from the right by $a_0$, we have
$a_0b'_0a_0 + a'_0b_0a_0 = 0$. Since $b_0a_0$ is central
nilpotent, $a_0b'_0a_0$ is central nilpotent, so $a_0b'_0$ and
$b'_0a_0$ are central nilpotent. From (1) $a'_0b_0$ and $b_0a'_0$
are central nilpotent.

Multiplying (2) from the right by $a_0$, we have
$a_0b'_1a_0+a_1b'_0a_0 + a'_0b_1a_0 + a'_1b_0a_0 = 0$. Since
$b'_0a_0$, $b_1a_0$ and $b_0a_0$ are central nilpotent,
$a_0b'_1a_0$ and so $a_0b'_1$ and $b'_1a_0$ are central nilpotent.
Now multiplying (2) from the left by $b_0$ and using central
nilpotency of $b_0a_0$, $b_0a_1$ and $b_0a'_0$,  we have
$b_0a'_1b_0$ is central nilpotent and so $b_0a'_1$ and $a'_1b_0$
are central nilpotent. Multiplying (2) from the right by $a_1$ and
using $a_0b'_1a_1$, $a'_0b_1a_1$ and $a'_1b_0a_1$ are central
nilpotent we have $a_1b'_0a_1$ and so $a_1b'_0$ and $b'_0a_1$ are
central nilpotent. The remaining term of (2) $a'_0b_1$ and
$b_1a'_0$ are central nilpotent.

Similarly, multiplying (3) from the right by $a_0$ and using
$b'_1a_0$, $b'_0a_0$, $b_2a_0$, $b_1a_0$ and $b_0a_0$ are central
nilpotent, we have $a_0b'_2a_0$, therefore $b'_2a_0$ and $a_0b'_2$
are central nilpotent. Multiplying (3) from the left by $b_0$ and
using $b_0a_0$, $b_0a_1$, $b_0a_2$, $b_0a'_0$ and $b_0a'_1$ are
central nilpotent, we have $b_0a'_2b_0$, therefore $b_0a'_2$ and
$a'_2b_0$ are central nilpotent. Multiplying (3) by $a_1$ from
right we have $a_0b'_2a_1$, $a_2b'_0a_1$, $a'_0b_2a_1$,
$a'_1b_1a_1$, $a'_2b_0a_1$ are central nilpotent. Hence
$a_1b'_1a_1$ and therefore $a_1b'_1$ and $b'_1a_1$ are central
nilpotent. Similarly, multiplying (3) by $b_1$ from the left we
have $b_1a_0b'_2$, $b_1a_1b'_1$, $b_1a_2b'_0$, $b_1a'_0b_2$ and
$b_1a'_2b_0$ are central nilpotent, therefore $b_1a'_1b_1$ and so
$b_1a'_1$ and $a'_1b_1$ are central nilpotent. Multiplying (3)
from the right by $a_2$ and using $a_0b'_2a_2$, $a_1b'_1a_2$,
$a'_0b_2a_2$, $a'_1b_1a_2$, $a'_2b_0a_2$ are central nilpotent,
then $a_2b'_0a_2$ and so $a_2b'_0$ and $b'_0a_2$ are central
nilpotent. $a'_0b_2$ in (3) is central nilpotent since it is a sum
of  central nilpotent elements, so is $b_2a'_2$. Thus all terms of
(3) are central nilpotent.

To complete the proof,  we use induction on $i + j$ for $i+j\leq
n+t$. Assume that the claim is true for all $i + j - 1$ where $i +
j\leq n + t$, that is for all $k$ and $l$ with $k+l\leq i+j-1$,
$a_kb'_l$, $b'_la_k$, $a'_kb_l$, $b_la'_k$ are central nilpotent.
Consider the $(i + j)$-th equation

\noindent $a_0b'_{i+j}+a_1b'_{i+j-1} +
a_2b'_{i+j-2}+...+a_{i+j-1}b'_1+a_{i+j}b'_0 +$ $a'_0b_{i+j} +
a'_1b_{i+j-1}+...+ a'_{i+j-2}b_2+a'_{i+j-1}b_1+a'_{i+j}b_0= 0$.

\noindent In order to complete the proof by induction,  we have to
show that all terms  are central nilpotent in $(i+j)$. We proceed
as preceding. Multiplying $(i + j)$ by $a_0$ from the right we
have $a_0b'_{i+j}a_0+a_1b'_{i+j-1}a_0 +
a_2b'_{i+j-2}a_0+...+a_{i+j-1}b'_1a_0+a_{i+j}b'_0a_0+$
$a'_0b_{i+j}a_0 + a'_1b_{i+j-1}a_0+...+
a'_{i+j-2}b_2a_0+a'_{i+j-1}b_1a_0+a'_{i+j}b_0a_0$. Since
$b'_{i+j-1}a_0$, $b'_{i+j-2}a_0$,...,$b'_1a_0$, $b'_0a_0$ are
central nilpotent by induction hypothesis and $b_{i+j}a_0$,
$b_{i+j-1}a_0$,..., $b_2a_0$, $b_1a_0$, $b_0a_0$ are central
nilpotent,  $a_0b'_{i+j}a_0$ and so $a_0b'_{i+j}$ and
$b'_{i+j}a_0$ are central nilpotent. Similarly,  multiplying $(i +
j)$ from the left by $b_0$,   we have $b_0a_0b'_{i+j}+$
$b_0a_1b'_{i+j-1}+$ $b_0a_2b'_{i+j-2}+ \ldots +b_0a_{i+j-1}b'_1+$
$b_0a_{i+j}b'_0+$ $b_0a'_0b_{i+j}+$ $b_0a'_1b_{i+j-1}+ \ldots +
b_0a'_{i+j-2}b_2+$ $b_0a'_{i+j-1}b_1+$ $b_0a'_{i+j}b_0=0$. Since
$b_0a_0$, $b_0a_1$, $b_0a_2$, ..., $b_0a_{i+j-1}$, $b_0a_{i+j}$,
$b_0a'_0$, $b_0a'_1$, ..., $b_0a'_{i+j-2}$, $b_0a'_{i+j-1}$ are
central nilpotent, $b_0a'_{i+j}b_0$ is central nilpotent and
therefore $b_0a'_{i+j}$ and $a'_{i+j}b_0$ are central nilpotent.
Proceeding in this manner, we finally obtain $a_kb'_l$, $b'_la_k$,
$a'_kb_l$, $b_la'_k$ are central nilpotent for all $k$ and $l$
with $k+l\leq i + j$. This completes the induction hypothesis.
Consequently $a_ib'_j$, $b'_ja_i$, $a'_ib_j$, $b_ja'_i$ are
central nilpotent for all $i$, $j$ with $i+j\leq n+t$. Then
\begin{center}$ \left [
\begin{array}{cc}a_i & a'_i\\0 & a_i \end{array}\right]\left
[\begin{array}{cc}b_j & b'_j\\0 & b_j \end{array}\right] = \left [
\begin{array}{cc}
a_ib_j & a_ib'_j+a'_ib_j\\
0 & a_ib_j \end{array}\right]$\end{center} is central in $T(R,R)$,
since from what we have proved that all entries of that matrix are
central in $R$. \hfill $\Box$\par\bigskip\end{proof}



\begin{footnotesize}

\end{footnotesize}
\end{document}